\newtheorem{theorem}{Theorem}[section]
\newtheorem{corollary}[theorem]{Corollary}
\newtheorem{proposition}[theorem]{Proposition}
\numberwithin{equation}{section}
\newcommand{\R}{\mathbb{R}}
\newcommand{\N}{\mathbb{N}}
\newcommand{\Z}{\mathbb{Z}}
\newcommand{\Q}{\mathbb{Q}}
\newcommand{\C}{\mathbb{C}}
\newcommand{\dis}{\displaystyle}
\newcommand{\supp}{\textup{supp}}
\newcommand{\tr}{\textup{tr}}
\newcommand{\eps}{\varepsilon}
\begin{document}

\title[Distribution of algebraic numbers]
{Distribution of algebraic numbers}%
\author{Igor E. Pritsker}%

\thanks{Research was partially supported by the National Security
Agency, and by the Alexander von Humboldt Foundation.}

\address{Department of Mathematics, Oklahoma State University, Stillwater, OK 74078, U.S.A.}%
\email{igor@math.okstate.edu}

\subjclass[1991]{Primary 11C08; Secondary 11R04, 26C10, 30C15}%
\keywords{Polynomials, integer coefficients, algebraic numbers, trace problem, Mahler measure, height.}%



\begin{abstract}

Schur studied limits of the arithmetic means $A_n$ of zeros for polynomials of degree $n$ with integer coefficients and simple zeros in the closed unit disk. If the leading coefficients are bounded, Schur proved that $\limsup_{n\to\infty} |A_n| \le 1-\sqrt{e}/2.$ We show that $A_n \to 0$, and estimate the rate of convergence by generalizing the Erd\H{o}s-Tur\'an theorem on the distribution of zeros. As an application, we show that integer polynomials have some unexpected restrictions of growth on the unit disk.

Schur also studied problems on means of algebraic numbers on the real line. When all conjugate algebraic numbers are positive, the problem of finding the sharp lower bound for $\liminf_{n\to\infty} A_n$ was developed further by Siegel and others. We provide a solution of this problem for algebraic numbers equidistributed in subsets of the real line.

Potential theoretic methods allow us to consider distribution of algebraic numbers in or near general sets in the complex plane. We introduce the generalized Mahler measure, and use it to characterize asymptotic equidistribution of algebraic numbers in arbitrary compact sets of capacity one. The quantitative aspects of this equidistribution are also analyzed in terms of the generalized Mahler measure.

\end{abstract}

\maketitle


\section{Schur's problems on means of algebraic numbers}

Let $E$ be a subset of the complex plane $\C.$ Consider the set of polynomials $\Z_n(E)$ of the exact degree $n$ with integer coefficients and all zeros in $E$. We denote the subset of $\Z_n(E)$ with simple zeros by $\Z_n^s(E)$. Given $M>0$, we write $P_n=a_nz^n + \ldots\in\Z_n^s(E,M)$ if $|a_n|\le M$ and $P_n\in\Z_n^s(E)$ (respectively $P_n\in\Z_n(E,M)$ if $|a_n|\le M$ and $P_n\in\Z_n(E)$). Schur \cite{Sch}, \S 4-8, studied the limit behavior of the arithmetic means of zeros for polynomials from $\Z_n^s(E,M)$ as $n\to\infty,$ where $M>0$ is an arbitrary fixed number. His results may be summarized in the following statements. Let $\R_+:=[0,\infty),$ where $\R$ is the real line.

\noindent{\bf Theorem A} (Schur \cite{Sch}, Satz IX) {\em Given a polynomial $P_n(z)=a_n\prod_{k=1}^n (z-\alpha_{k,n})$, define the arithmetic mean of squares of its zeros by $S_n:=\sum_{k=1}^n \alpha_{k,n}^2/n.$ If $P_n\in\Z_n^s(\R,M)$ is any sequence of polynomials with degrees $n\to\infty$, then}
\begin{align} \label{1.1}
\liminf_{n\to\infty} S_n \ge \sqrt{e} > 1.6487.
\end{align}

\noindent{\bf Theorem B} (Schur \cite{Sch}, Satz XI) {\em For a polynomial $P_n(z)=a_n\prod_{k=1}^n (z-\alpha_{k,n})$, define the arithmetic mean of its zeros by $A_n:=\sum_{k=1}^n \alpha_{k,n}/n.$ If $P_n\in\Z_n^s(\R_+,M)$ is any sequence of polynomials with degrees $n\to\infty$, then}
\begin{align} \label{1.2}
\liminf_{n\to\infty} A_n \ge \sqrt{e} > 1.6487.
\end{align}
It is clear that Theorems A and B are connected by the transformation $w=z^2.$ Let $D:=\{z:|z|\le 1\}$ be the closed unit disk.

\medskip
\noindent{\bf Theorem C} (Schur \cite{Sch}, Satz XIII) {\em If $P_n\in\Z_n^s(D,M)$ is any sequence of polynomials with degrees $n\to\infty$, then}
\begin{align} \label{1.3}
\limsup_{n\to\infty} |A_n| \le 1-\sqrt{e}/2 < 0.1757.
\end{align}

Schur remarked that the $\limsup$ in \eqref{1.3} is equal to $0$ for {\em monic} polynomials from $\Z_n(D)$ by Kronecker's theorem \cite{Kr}. We prove that $\lim_{n\to\infty} A_n = 0$ for any sequence of polynomials from Schur's class $\Z_n^s(D,M),\ n\in\N.$ This result is obtained as a consequence of the asymptotic equidistribution of zeros near the unit circle. Namely, if $\{\alpha_{k,n}\}_{k=1}^n$ are the zeros of $P_n$, we define the zero counting measure
\[
\tau_n := \frac{1}{n} \sum_{k=1}^n \delta_{\alpha_{k,n}},
\]
where $\delta_{\alpha_{k,n}}$ is the unit point mass at $\alpha_{k,n}$. Consider the normalized arclength measure $\mu_D$ on the unit circumference, with $d\mu_D(e^{it}):=\frac{1}{2\pi}dt.$ If $\tau_n$ converge weakly to $\mu_D$ as $n\to\infty$ ($\tau_n \stackrel{*}{\rightarrow} \mu_D$) then
\[
\lim_{n\to\infty} A_n = \lim_{n\to\infty} \int z\,d\tau_n(z) = \int z\,d\mu_D(z) = 0.
\]
Thus Schur's problem is solved by the following result \cite{Pr3}.

\begin{theorem} \label{thm1.1}
If $P_n\in\Z_n^s(D,M),\ n\in\N,$ then $\tau_n \stackrel{*}{\rightarrow} \mu_D$ as $\deg(P_n)=n\to\infty.$
\end{theorem}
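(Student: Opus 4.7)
The plan is to show that every weak-star subsequential limit of $\{\tau_n\}$ coincides with the equilibrium measure $\mu_D$ of the closed unit disk. The key tool is the logarithmic energy
\[
I(\mu) := -\iint \log|z-w|\,d\mu(z)\,d\mu(w),
\]
together with the classical fact from potential theory that $\mu_D$ is the unique probability measure on $D$ minimizing $I$, with minimum value $I(\mu_D) = \log(1/\textup{cap}(D)) = 0$ since $\textup{cap}(D)=1$. All $\tau_n$ are supported on the compact set $D$, so the family is sequentially precompact in the weak-star topology by the Helly selection theorem; it thus suffices to identify each subsequential limit with $\mu_D$.

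The arithmetic input comes from simplicity of zeros combined with integrality of coefficients. The discriminant
\[
\textup{disc}(P_n) = a_n^{2n-2}\prod_{1\le i<j\le n}(\alpha_{i,n}-\alpha_{j,n})^2
\]
is a nonzero rational integer, so $|\textup{disc}(P_n)|\ge 1$. Combined with $|a_n|\le M$, taking logarithms yields
\[
\sum_{1\le i<j\le n}\bigl(-\log|\alpha_{i,n}-\alpha_{j,n}|\bigr) \le (n-1)\log M.
\]

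To pass from this discrete bound to the limit measure, truncate the kernel: for each $L>0$ set $k_L(z,w):=\min(-\log|z-w|,L)$, which is continuous and bounded on $D\times D$, and let $I_L(\mu):=\iint k_L(z,w)\,d\mu(z)\,d\mu(w)$. Separating the diagonal and using $k_L(z,w)\le -\log|z-w|$ off the diagonal gives
\[
I_L(\tau_n) = \frac{L}{n}+\frac{1}{n^2}\sum_{i\ne j}k_L(\alpha_{i,n},\alpha_{j,n}) \le \frac{L}{n}+\frac{2(n-1)\log M}{n^2}\longrightarrow 0
\]
as $n\to\infty$ with $L$ fixed. Since $k_L$ is continuous and bounded, $\mu\mapsto I_L(\mu)$ is weak-star continuous, so any subsequential limit $\mu$ satisfies $I_L(\mu)\le 0$. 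Letting $L\to\infty$ and applying monotone convergence gives $I(\mu)\le 0$, which together with $I(\mu)\ge I(\mu_D)=0$ forces $I(\mu)=0$, whence $\mu=\mu_D$ by uniqueness of the equilibrium minimizer.

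The main technical obstacle is the logarithmic singularity on the diagonal, which makes the naive ``discrete energy'' of $\tau_n$ formally infinite; the truncation parameter $L$ is the standard device for circumventing this, and the discriminant bound is precisely what makes the truncated-energy estimate uniform in $n$. Since every weak-star accumulation point equals $\mu_D$, the full sequence converges: $\tau_n\stackrel{*}{\rightarrow}\mu_D$.
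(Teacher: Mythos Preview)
Your proof is correct and follows essentially the same approach as the paper. The paper formally derives Theorem~\ref{thm1.1} as an immediate consequence of the more general Theorem~\ref{thm2.1} (noting $M(P_n)=|a_n|\le M$), but the engine behind that theorem---the discriminant lower bound $|\Delta(P_n)|\ge 1$, the truncated kernel $K_M(z,t)=\min(-\log|z-t|,M)$, Helly selection, and the characterization of $\mu_D$ as the unique probability measure on $D$ of zero logarithmic energy---is exactly what you carry out directly for $E=D$; your argument simply bypasses the extra bookkeeping in Theorem~\ref{thm2.1} needed to handle zeros escaping to infinity, which is vacuous here since all zeros lie in $D$.
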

In fact, Theorem \ref{thm1.1} is a simple consequence of more general results from Section 2. Ideas on the equidistribution of zeros date back to the work of Jentzsch \cite{Je} on the asymptotic zero distribution of the partial sums of a power series, and its generalization by Szeg\H{o} \cite{Sz}. They were developed further by Erd\H{o}s and Tur\'an \cite{ET}, and many others, see Andrievskii and Blatt \cite{AB} for history and additional references. More recently, this topic received renewed attention in number theory, e.g. in the work of Bilu \cite{Bi}, Bombieri \cite{Bo} and Rumely \cite{Ru}.

Theorems A and B were developed in the following directions. If $P_n(z)=a_n\prod_{k=1}^n (z-\alpha_{k,n})$ is irreducible over integers, then $\{\alpha_{k,n}\}_{k=1}^n$ is called a complete set of conjugate algebraic numbers of degree $n$. When $a_n=1$, we refer to $\{\alpha_{k,n}\}_{k=1}^n$ as algebraic integers. If $\alpha=\alpha_{1,n}$ is one of the conjugates, then the sum of $\{\alpha_{k,n}\}_{k=1}^n$ is also called the trace $\tr(\alpha)$ of $\alpha$ over $\Q$. Siegel \cite{Si} improved Theorem B for totally positive algebraic integers to
\[
\liminf_{n\to\infty} A_n = \liminf_{n\to\infty} \tr(\alpha)/n > 1.7336105,
\]
by using an ingenious refinement of the arithmetic-geometric means
inequality that involves the discriminant of $\alpha_{k,n}$. Smyth
\cite{Sm2} introduced a numerical method of ``auxiliary
polynomials," which produced a series of subsequent improvements of
the above lower bound. The original papers \cite{Sm1,Sm2} contain the bound $1.7719$. The most recent results include bounds $1.780022$ by Aguirre, Bilbao, and Peral \cite{ABP}, $1.784109$ by Aguirre and Peral \cite{AP}, and $1.78702$ by Flammang. Thus the Schur-Siegel-Smyth trace problem is to find the smallest limit point $\ell$ for the set of values of mean traces $A_n$ for all totally positive and real algebraic integers. It was observed by Schur \cite{Sch} (see also Siegel \cite{Si}), that $\ell \le 2$. This immediately follows from the fact that, for any odd prime $p$, the totally positive algebraic integer $4\cos^2(\pi/p)$ has degree $(p-1)/2$ and trace $p-2.$ The Schur-Siegel-Smyth trace problem is probably the best known unsolved problem that originated in \cite{Sch}. It is connected with other extremal problems for polynomials with integer coefficients, such as the integer Chebyshev problem, see Borwein and Erd\'elyi \cite{BE}, Borwein \cite{Bor1}, Flammang, Rhin, and Smyth \cite{FRS}, Pritsker \cite{Pr1}, Aguirre and Peral \cite{AP}, and Smyth \cite{Sm3}. Other developments on Schur's problems for the means of algebraic numbers may be found in the papers by Dinghas \cite{Di} and Hunter \cite{Hu}. Although we are not able to provide a complete solution to the Schur-Siegel-Smyth trace problem by finding the smallest values of $\liminf$ in Theorems A and B, we give the sharp lower bound (namely 2) in certain important special cases. Our results are based again on the limiting distribution of algebraic numbers in subsets of the real line, see Section 2.

Section 3 is devoted to the quantitative aspects of convergence $\tau_n \stackrel{*}{\rightarrow} \mu_D$ as $n\to\infty$. We prove a new version (and generalization) of the Erd\H{o}s-Tur\'an theorem on equidistribution of zeros near the unit circle, and near more general sets. This gives estimates of convergence rates for $A_n$ and $S_n$ in Schur's problems.
Furthermore, we obtain some unexpected estimates on growth of polynomials with integer coefficients as an application.

All proofs are given in Section 4.

\section{Asymptotic distribution of algebraic numbers}

We consider asymptotic zero distribution for polynomials with integer coefficients that have sufficiently small norms on compact sets. Asymptotic zero distribution of polynomials is a classical area of analysis with long history that started with papers of Jentzsch \cite{Je} and Szeg\H{o} \cite{Sz}, see \cite{AB} for more complete bibliography. Most of the results developed in analysis use the supremum norms of polynomials. However, the use of the supremum norm even for Schur's problem on the unit disk represents an immediate difficulty, as we have no suitable estimates for polynomials from the class $\Z_n^s(D,M)$. A better way to measure the size of an integer polynomial on the unit disk is given by the Mahler measure, which is also known as the $L_0$ norm or the geometric mean. The Mahler measure of a polynomial $P_n(z) = a_n\prod_{k=1}^n (z-\alpha_{k,n}),\ a_n\neq 0,$ is defined by
\[
M(P_n) := \exp\left(\frac{1}{2\pi} \int_0^{2\pi} \log |P_n(e^{it})|\,dt\right).
\]
Note that $M(P_n) = \lim_{p\to 0} \|P_n\|_p$, where $\|P_n\|_p:=\left(\frac{1}{2\pi} \int_0^{2\pi} |P_n(e^{it})|^p\,dt\right)^{1/p},\ p>0$, hence the $L_0$ norm name. We caution, however, that the Mahler measure does not satisfy the triangle inequality. Jensen's formula readily gives \cite[p. 3]{Bo}
\[
M(P_n) = |a_n|\prod_{k=1}^n \max(1,|\alpha_{k,n}|).
\]
It is immediate to see now that $M(P_n)=|a_n|\le M$ for any $P_n\in\Z_n(D,M),$ which illustrates usefulness and convenience of the Mahler measure for our purposes. Ideas connecting the Mahler measure and distribution of algebraic numbers are very basic to the area, and they previously appeared in various forms in many papers. Without trying to present a comprehensive survey, we mention results on the lower bounds
for the Mahler measure by Schinzel \cite{Sc}, Langevin
\cite{La1, La2, La3}, Mignotte \cite{Mi}, Rhin and Smyth
\cite{RhSm}, Dubickas and Smyth \cite{DuSm}, and the recent survey of Smyth \cite{Sm4}. The asymptotic distribution of algebraic numbers near the unit circle was considered by Bilu \cite{Bi} (see also Bombieri \cite{Bo}) in terms of the absolute logarithmic (or na\"ive) height. His results were generalized to compact sets of capacity 1 by Rumely \cite{Ru}. We proceed to a similar generalization, but use a somewhat different notion of the generalized Mahler measure to obtain an ``if and only if" theorem on the equidistribution of algebraic numbers near arbitrary compact sets in the plane. Our proofs follow standard potential theoretic arguments, and are relatively simple and short.

Consider an arbitrary compact set $E\subset\C.$ As a normalization for its size, we assume that capacity cap$(E)=1,$ see \cite{Ts}, p. 55. In particular, cap$(D)=1$ and capacity of a segment is equal to one quarter of its length \cite{Ts}, p. 84. Examples of sets of capacity one on the real line are given by the segments $[-2,2]$ and $[0,4]$. Let $\mu_E$ be the equilibrium measure of $E$ \cite{Ts}, p. 55, which is a unique probability measure expressing the steady state distribution of charge on the conductor $E$. Note that $\mu_E$ is supported on the boundary of the unbounded connected component $\Omega_E$ of $\overline{\C}\setminus E$ by \cite{Ts}, p. 79. For the unit disk $D$, the equilibrium measure $d\mu_D(e^{it})=\frac{1}{2\pi}dt$ is the normalized arclength measure on the unit circumference. We also have (cf. \cite{ST}, p. 45) that
\begin{align*}
d\mu_{[-2,2]}(x)=\frac{dx}{\pi\sqrt{4-x^2}}, \ x\in(-2,2), \quad \mbox{and}\quad d\mu_{[0,4]}(x)=\frac{dx}{\pi\sqrt{x(4-x)}}, \ x\in(0,4).
\end{align*}

Consider the Green function $g_E(z,\infty)$ for $\Omega_E$ with pole at $\infty$ (cf. \cite{Ts}, p. 14), which is a positive harmonic function in $\Omega_E\setminus\{\infty\}$. Note that $g_D(z,\infty)=\log|z|,\ |z|>1,$ and $g_{[-2,2]}(z,\infty)=\log|z+\sqrt{z^2-4}|-\log{2},\ z\in\C\setminus[-2,2].$ Thus a natural generalization of the Mahler measure for $P_n(z) = a_n\prod_{k=1}^n (z-\alpha_{k,n}),\ a_n\neq 0,$ on an arbitrary compact set $E$ of capacity 1, is given by
\[
M_E(P_n) := |a_n| \exp\left(\sum_{\alpha_{k,n}\in\Omega_E} g_E(\alpha_{k,n},\infty)\right).
\]
If no $\alpha_{k,n}\in\Omega_E$ then we assume that the above (empty) sum is equal to zero. In the sequel, any empty sum is equal to 0, and any empty product is equal to 1 by definition.

We are now ready to state the main equidistribution result.
\begin{theorem} \label{thm2.1}
Let $P_n(z) = a_n\prod_{k=1}^n (z-\alpha_{k,n}),\ \deg(P_n)=n\in\N,$ be a sequence of polynomials with integer coefficients and simple zeros. Suppose that $E\subset\C$ is a compact set of capacity $1$. We have
\begin{align} \label{2.1}
\lim_{n\to\infty} \left(M_E(P_n)\right)^{1/n} = 1
\end{align}
if and only if
\begin{align} \label{2.2}
\left\{
\begin{array}{l}
(i) \dis\lim_{n\to\infty} |a_n|^{1/n} = 1, \\ (ii) \dis\lim_{R\to\infty} \lim_{n\to\infty} \left( \prod_{|\alpha_{k,n}|\ge R} |\alpha_{k,n}| \right)^{1/n} = 1, \\ (iii)\ \dis\tau_n = \frac{1}{n}\sum_{k=1}^n \delta_{\alpha_{k,n}} \stackrel{*}{\rightarrow} \mu_E \mbox{ as }n\to\infty.
\end{array}
\right.
\end{align}
\end{theorem}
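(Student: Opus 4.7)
My plan is to exploit the additive decomposition
\[
\frac{1}{n}\log M_E(P_n) = \frac{1}{n}\log|a_n| + \int g_E^*\,d\tau_n,
\]
where $g_E^*$ agrees with $g_E(\cdot,\infty)$ on $\Omega_E$ and is set equal to $0$ on $E$. Both summands are nonnegative, since $a_n$ is a nonzero integer (so $|a_n|\ge 1$) and $g_E^*\ge 0$; this sign information is the backbone of both implications, because it allows one to pass from convergence of the sum to termwise convergence whenever the sum tends to $0$.

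For $(2.2)\Rightarrow(2.1)$, (i) kills the first summand. For the second, I would fix $R$ large and pick a continuous cutoff $\phi_R$ with $\phi_R\equiv 1$ on $\{|z|\le R\}$ and $\textup{supp}(\phi_R)\subset\{|z|<R+1\}$. Off a polar set of irregular boundary points of $E$ (which $\mu_E$ does not charge), $g_E^*\phi_R$ is bounded upper semicontinuous, so (iii) combined with $g_E^*\equiv 0$ on $\textup{supp}(\mu_E)$ gives $\limsup_n\int g_E^*\phi_R\,d\tau_n\le 0$. On the tail $|z|>R$, the Green-function asymptotic $g_E(z,\infty)=\log|z|+o(1)$ at infinity bounds $\int_{|z|>R} g_E^*\,d\tau_n$ by a constant times $\frac{1}{n}\sum_{|\alpha_{k,n}|\ge R}\log|\alpha_{k,n}|$, which (ii) drives to $0$ upon letting $n\to\infty$ and then $R\to\infty$.

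For the converse $(2.1)\Rightarrow(2.2)$, nonnegativity forces each summand in the decomposition to $0$, yielding (i) at once and $\int g_E^*\,d\tau_n\to 0$. Using $g_E(\alpha,\infty)\ge\tfrac12\log|\alpha|$ for $|\alpha|$ large, (ii) follows by sandwiching $\frac{1}{n}\sum_{|\alpha_{k,n}|\ge R}\log|\alpha_{k,n}|$ between $0$ and $2\int g_E^*\,d\tau_n$. Condition (ii) in turn makes $\{\tau_n\}$ tight, so any weak-$*$ subsequential limit $\mu$ is a probability measure; and since $g_E^*\ge\delta>0$ on any open $V$ with $\overline V\subset\Omega_E$, one has $\mu(V)\le\liminf_n\tau_n(V)\le\delta^{-1}\lim_n\int g_E^*\,d\tau_n=0$, hence $\textup{supp}(\mu)\subset E$.

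The main obstacle is upgrading $\textup{supp}(\mu)\subset E$ to the identification $\mu=\mu_E$. Here I would invoke the integrality of the discriminant: since the zeros are simple, $|\textup{disc}(P_n)|=|a_n|^{2n-2}\prod_{i<j}|\alpha_{i,n}-\alpha_{j,n}|^2$ is a positive integer, so
\[
\frac{2}{n^2}\sum_{i<j}\log|\alpha_{i,n}-\alpha_{j,n}|\ \ge\ -\frac{2n-2}{n^2}\log|a_n|\ \longrightarrow\ 0.
\]
Feeding this through the bounded continuous truncations $f_M(z,w):=\max(\log|z-w|,-M)$ against $\tau_n\otimes\tau_n$ (the diagonal contribution is $-M/n\to 0$), and then letting $M\to\infty$ via monotone convergence, yields $\iint\log|z-w|\,d\mu(z)\,d\mu(w)\ge 0$. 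Since $\textup{cap}(E)=1$ makes the Robin constant $\gamma_E=0$, this translates into the logarithmic energy bound $I(\mu)\le 0=I(\mu_E)$; combined with $\textup{supp}(\mu)\subset E$ and the uniqueness of $\mu_E$ as the minimizer of $I$ among probability measures on $E$, it forces $\mu=\mu_E$. As every subsequential limit is $\mu_E$, (iii) follows.
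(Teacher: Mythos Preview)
Your strategy is essentially the paper's: decompose $\frac{1}{n}\log M_E(P_n)$ into nonnegative summands, handle tails via $g_E(z,\infty)\sim\log|z|$, and identify any weak-$*$ limit $\mu$ as $\mu_E$ through the discriminant/energy argument combined with uniqueness of the equilibrium measure. There is, however, a genuine gap in the energy step.

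You call $f_M(z,w)=\max(\log|z-w|,-M)$ a ``bounded continuous truncation,'' but $f_M$ is bounded only \emph{below}; it is unbounded above as $|z-w|\to\infty$. For a continuous function bounded only from below, Portmanteau gives $\liminf_n\iint f_M\,d(\tau_n\otimes\tau_n)\ge\iint f_M\,d(\mu\otimes\mu)$, which is the wrong inequality for bounding $I(\mu)$ from above via the discriminant. The paper addresses precisely this point by first passing to truncated counting measures $\hat\tau_n$ (zeros in a fixed disk $D_R$), on whose common support the kernel is genuinely bounded, and then relating the truncated Vandermonde $V(\hat P_n)$ to the full $V(P_n)$ via the tail control (ii): $|V(P_n)|^2\le|V(\hat P_n)|^2\prod_{|\alpha_{k,n}|\ge R}(2|\alpha_{k,n}|)^{2(n-1)}$. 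You have the ingredients---you derived (ii) and tightness---but the tail contribution to the energy integral must be handled explicitly before the monotone-convergence step. Two smaller slips: your argument yields $\textup{supp}(\mu)\subset\overline{\C}\setminus\Omega_E$, not $\subset E$ (harmless, since capacity and equilibrium measure coincide for the two sets); and your $g_E^*$ is not upper semicontinuous at irregular boundary points---the paper instead uses the subharmonic extension $-U^{\mu_E}$, which is genuinely u.s.c., dominates your $g_E^*$, and still satisfies $\int(-U^{\mu_E})\,d\mu_E=0$.
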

\noindent{\bf Remark.} Our proof shows that for $E=D$ one can replace ({\it ii}) in \eqref{2.2} by the condition: There exists $R>1$ such that
\[
\lim_{n\to\infty} \left( \prod_{|\alpha_{k,n}|\ge R} |\alpha_{k,n}| \right)^{1/n} = 1.
\]
In the direction \eqref{2.1} $\Rightarrow$ \eqref{2.2}({\it iii}), our result essentially reduces to that of Bilu \cite{Bi} for the unit disk, and to the result of Rumely \cite{Ru} for general compact sets. Indeed, if $P_n$ is the minimal (irreducible) polynomial for the complete set of conjugate algebraic numbers $\{\alpha_{k,n}\}_{k=1}^n$, then the logarithmic height $h(\alpha_n) = \frac{1}{n} \log M(P_n)$ by \cite{Lan}, p. 54. Hence \eqref{2.1} gives that $h(\alpha_n)\to 0$, which is a condition used by Bilu \cite{Bi}. The converse direction \eqref{2.2} $\Rightarrow$ \eqref{2.1} seems to be new even in the unit disk case. Clearly, Theorem \ref{thm1.1} is a simple consequence of Theorem \ref{thm2.1}.

When the leading coefficients of polynomials are bounded, and all zeros are located in $E$, as assumed by Schur, then we can allow certain multiple zeros. Define the multiplicity of an irreducible factor $Q$ (with integer coefficients) of $P_n$ as an integer $m_n\ge 0$ such that $Q^{m_n}$ divides $P_n$, but $Q^{m_n+1}$ does not divide $P_n$. If a factor $Q$ occurs infinitely often in a sequence $P_n,\ n\in\N,$ then $m_n=o(n)$ means $\lim_{n\to\infty} m_n/n =0.$ If $Q$ is present only in finitely many $P_n$, then $m_n=o(n)$ by definition. We note that any infinite sequence of distinct factors $Q_k$ of polynomials $P_n\in\Z_n(E,M)$ must satisfy $\deg(Q_k)\to\infty$ as $k\to\infty.$ Indeed, if the degrees of $Q_k$ are uniformly bounded, then Vi\`ete's formulas expressing coefficients through the symmetric functions of zeros lead to a uniform bound on all coefficients, where we also use the uniform bounds on the leading coefficients and all zeros for $P_n\in\Z_n(E,M)$. This means that we may only have finitely many such factors $Q_k$ of bounded degree.

\begin{theorem} \label{thm2.2}
Let $E\subset\C$ be a compact set of capacity $1$. Assume that $P_n\in\Z_n(E,M),\ n\in\N$. If every irreducible factor in the sequence of polynomials $P_n$ has multiplicity $o(n)$, then $\tau_n \stackrel{*}{\rightarrow} \mu_E$ as $n\to\infty.$
\end{theorem}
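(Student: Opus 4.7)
The plan is to factor $P_n$ into primitive irreducible polynomials over $\Z$, apply Theorem~\ref{thm2.1} to each factor of large degree, and use the multiplicity hypothesis to absorb the contribution of factors of small degree.

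First I would write $P_n = c_n \prod_j Q_{j,n}^{e_{j,n}}$ with $c_n\in\Z$ and $Q_{j,n}$ primitive and irreducible in $\Z[z]$. Since the leading coefficient of $P_n$ equals $c_n\prod_j c(Q_{j,n})^{e_{j,n}}$ and has absolute value at most $M$, while $|c_n|$ and each leading coefficient $|c(Q_{j,n})|$ is a positive integer, the divisibility $|c(Q_{j,n})|^{e_{j,n}}\le M$ in $\Z$ forces $|c(Q_{j,n})|\le M$ for every factor with $e_{j,n}\ge 1$. Hence each $Q_{j,n}\in\Z_{\deg Q_{j,n}}^s(E,M)$ with $M_E(Q_{j,n})=|c(Q_{j,n})|\in[1,M]$, so $M_E(Q_{j,n})^{1/\deg Q_{j,n}}\to 1$ as $\deg Q_{j,n}\to\infty$. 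Applied to any sequence of such irreducible polynomials with degrees tending to infinity, Theorem~\ref{thm2.1} yields weak-$*$ convergence of the normalized zero counting measure $\sigma_Q:=\frac{1}{\deg Q}\sum_{Q(\alpha)=0}\delta_\alpha$ to $\mu_E$. A standard extraction argument then upgrades this to the uniform statement: for every $\epsilon>0$ there exists $D(\epsilon)$ such that $d(\sigma_Q,\mu_E)<\epsilon$ for every irreducible $Q\in\Z[z]$ with $|c(Q)|\le M$, roots in $E$, and $\deg Q\ge D(\epsilon)$, where $d$ metrizes weak-$*$ convergence on probability measures supported in the compact set $E$.

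Next, decompose $\tau_n=\sum_j w_{j,n}\sigma_{j,n}$ with $w_{j,n}=e_{j,n}\deg Q_{j,n}/n$ and $\sum_j w_{j,n}=1$, according to whether $\deg Q_{j,n}<D(\epsilon)$ or $\ge D(\epsilon)$. As noted in the paragraph preceding the theorem, the irreducible factors of polynomials in $\Z_n(E,M)$ of bounded degree form a finite set $\{R_1,\dots,R_T\}$ (bounded leading coefficient and, via Vi\`ete on the compact set $E$, bounded coefficients). The multiplicity hypothesis thus gives
\[
\sum_{j:\deg Q_{j,n}<D(\epsilon)} w_{j,n}\le \sum_{i=1}^T \frac{m_n(R_i)\deg R_i}{n}=o(1).
\]
For any Lipschitz test function $\phi$, the identity
\[
\int\phi\,d\tau_n - \int\phi\,d\mu_E = \sum_j w_{j,n}\Bigl(\int\phi\,d\sigma_{j,n}-\int\phi\,d\mu_E\Bigr)
\]
together with the uniform bound $|\int\phi\,d\sigma_{j,n}-\int\phi\,d\mu_E|\le\epsilon\|\phi\|_{\mathrm{Lip}}$ for $\deg Q_{j,n}\ge D(\epsilon)$ yields $\limsup_n|\int\phi\,d\tau_n-\int\phi\,d\mu_E|\le\epsilon\|\phi\|_{\mathrm{Lip}}$. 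Sending $\epsilon\to 0$, and approximating arbitrary continuous $\phi$ on the compact set $E$ by Lipschitz functions, gives $\tau_n\stackrel{*}{\rightarrow}\mu_E$.

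The hard part is the uniform convergence in the first step, which is not part of Theorem~\ref{thm2.1} itself: if it failed, one could extract a sequence of irreducible $Q$'s of strictly increasing degree on which $d(\sigma_Q,\mu_E)$ stayed bounded away from zero, directly contradicting the already-proved Theorem~\ref{thm2.1} for that sequence. Everything else---the bound $|c(Q_{j,n})|\le M$, the finiteness of the pool of small-degree factors, and the small/large split---is routine bookkeeping built on the multiplicity hypothesis.
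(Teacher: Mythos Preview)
Your proof is correct and follows essentially the same approach as the paper: factor $P_n$ into irreducibles, invoke Theorem~\ref{thm2.1} via an extraction/contradiction argument to show only finitely many irreducible factors can be ``bad'' for a given tolerance, and then use the $o(n)$ multiplicity hypothesis to kill the contribution of those finitely many bad factors. The only cosmetic difference is that the paper fixes a single test function $\phi$ and defines the exceptional factors directly by $|\int\phi\,d\tau(Q)-\int\phi\,d\mu_E|\ge\epsilon$, whereas you metrize weak-$*$ convergence and split by a degree threshold $D(\epsilon)$; the underlying mechanism is identical.
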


We state a simple corollary that includes a solution of Schur's problem for the unit disk, cf. Theorem C. This result was announced in \cite{Pr3}, together with special cases of other results from this section stated for the unit disk.
\begin{corollary} \label{cor2.3}
Suppose that $E=D.$ If $P_n(z)=a_n\prod_{k=1}^n (z-\alpha_{k,n}),\ \deg(P_n)=n\in\N,$ satisfy $\tau_n \stackrel{*}{\rightarrow} \mu_D$ when $n\to\infty$, in the settings of Theorem \ref{thm2.1} or \ref{thm2.2}, then
\[
\lim_{n\to\infty} \frac{1}{n}\sum_{k=1}^n \alpha_{k,n}^m =0, \quad m\in\N.
\]
\end{corollary}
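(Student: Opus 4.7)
The plan is to recognize $\frac{1}{n}\sum_{k=1}^n \alpha_{k,n}^m = \int z^m \, d\tau_n(z)$ and to compute the candidate limit explicitly: since $d\mu_D(e^{it}) = \frac{1}{2\pi}\,dt$, we have $\int z^m \, d\mu_D = \frac{1}{2\pi}\int_0^{2\pi} e^{imt}\, dt = 0$ for every $m \in \N$. The entire content of the corollary is therefore to pass the weak-$*$ limit under the integral when the test function $z \mapsto z^m$ fails to be bounded on all of $\C$.

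Under the hypotheses of Theorem \ref{thm2.2} with $E = D$, every zero $\alpha_{k,n}$ lies in $D$, so the measures $\tau_n$ share the common compact support $D$ on which $z^m$ is continuous and bounded. The weak-$*$ convergence $\tau_n \stackrel{*}{\rightarrow} \mu_D$ then gives $\int z^m \, d\tau_n \to \int z^m \, d\mu_D = 0$ immediately, settling the Schur-type case that motivates the corollary.

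In the setting of Theorem \ref{thm2.1} the zeros may lie outside $D$, but all of \eqref{2.2}(i)--(iii) are available. I would fix $R > 1$, introduce a continuous compactly supported cutoff $\phi_R$ that agrees with $z^m$ on $\{|z| \le R\}$, and apply weak-$*$ to $\phi_R$ to obtain $\int \phi_R \, d\tau_n \to \int \phi_R \, d\mu_D = 0$ (valid because $\supp \mu_D \subset \{|z|\le R\}$). The residual $\int z^m\, d\tau_n - \int \phi_R\, d\tau_n$ is a tail sum over zeros with $|\alpha_{k,n}| > R$; using the pointwise inequality $R^{N_R(n)} \le \prod_{|\alpha_{k,n}|\ge R} |\alpha_{k,n}|$ together with condition (ii) to show $N_R(n)/n \to 0$ as $R \to \infty$, one sends $n \to \infty$ first and then $R \to \infty$.

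The main obstacle is converting the geometric-mean bound in (ii) into an arithmetic bound on $\frac{1}{n}\sum_{|\alpha_{k,n}|>R} |\alpha_{k,n}|^m$, since the AM--GM inequality runs the wrong way for an upper estimate on the sum. The additional leverage must come from combining condition (i), $|a_n|^{1/n} \to 1$, with the integer-coefficient hypothesis and the factorization $M_D(P_n) = |a_n|\prod_{|\alpha_{k,n}|\ge 1}|\alpha_{k,n}|$, which together force the outlying zeros to cluster near the unit circle in a quantitative way strong enough to prevent their $m$th powers from inflating the mean.
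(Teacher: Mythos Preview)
Your treatment of the Theorem~\ref{thm2.2} case is correct and is exactly what the paper does: since all zeros lie in $D$, the $\tau_n$ share a common compact support on which $z^m$ is continuous, and weak-$*$ convergence yields the result in one line. In fact the paper's entire proof consists of that line and makes no distinction between the two settings.

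Your concern about the Theorem~\ref{thm2.1} case is justified, but the obstacle you flag in your last paragraph is not merely technical---it is fatal. Take $P_n(z) = (z^{n-1}-1)(z-n) = z^n - nz^{n-1} - z + n$: integer coefficients, simple zeros (the $(n-1)$th roots of unity together with the point $n$), and $M(P_n) = n$, so $M(P_n)^{1/n} \to 1$ and all of \eqref{2.2} holds; in particular $\tau_n \stackrel{*}{\rightarrow} \mu_D$. Yet $\frac{1}{n}\sum_{k=1}^n \alpha_{k,n} = \frac{1}{n}(0 + n) = 1$ for every $n \ge 3$, so the conclusion fails for $m=1$. No combination of \eqref{2.2}(i)--(ii) and integrality can control $\frac{1}{n}\sum_{|\alpha_{k,n}|>R}|\alpha_{k,n}|^m$, because the geometric-mean condition permits a single zero of size comparable to $n$. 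The corollary is therefore correct only when the zeros are uniformly bounded---as in the Theorem~\ref{thm2.2} setting and in Schur's class $\Z_n^s(D,M)$, which is the application the author actually has in mind. Your proof is complete for that case; the general Theorem~\ref{thm2.1} case is simply false as stated, so the ``additional leverage'' you speculate about in your closing paragraph does not exist.
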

We also show that the uniform norms
\[
\|P_n\|_E:= \sup_{z\in E} |P_n(z)|
\]
have at most subexponential growth on regular sets $E$, under the assumptions of Theorem \ref{thm2.1}. Regularity is understood here in the sense of the Dirichlet problem for $\Omega_E$, which means that the limiting boundary values of $g_E(z,\infty)$ are all zero, see \cite{Ts}, p. 82.
\begin{theorem} \label{thm2.4}
Let $E\subset\C$ be a regular compact set of capacity $1$. If $P_n,\ \deg(P_n)=n\in\N,$ is a sequence of polynomials with integer coefficients and simple zeros, then
\begin{align} \label{2.3}
\lim_{n\to\infty} \|P_n\|_E^{1/n} = 1
\end{align}
is equivalent to \eqref{2.1} or \eqref{2.2}.
\end{theorem}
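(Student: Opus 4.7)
The plan is to establish the equivalence by proving $(2.3) \Leftrightarrow (2.1)$, since $(2.1) \Leftrightarrow (2.2)$ is already Theorem \ref{thm2.1}. The computational heart of the argument is the integral representation
\[
\log M_E(P_n) = \int \log|P_n(w)|\,d\mu_E(w),
\]
valid for any regular compact $E$ of capacity $1$. To verify it, I note that regularity gives $g_E(\cdot,\infty)=0$ on $E$ (and by the maximum principle also on bounded components of $\C\setminus E$), so $\sum_{\alpha_{k,n}\in\Omega_E} g_E(\alpha_{k,n},\infty)=n\int g_E(z,\infty)\,d\tau_n(z)$. Since $\mathrm{cap}(E)=1$ forces the Robin constant $V_E=0$, the identity $g_E(z,\infty)=\int\log|z-w|\,d\mu_E(w)$ holds on all of $\C$, and Fubini converts $\int g_E(z,\infty)\,d\tau_n(z)$ into $\frac{1}{n}\int\log|P_n(w)/a_n|\,d\mu_E(w)$; substituting into the definition of $M_E(P_n)$ yields the identity.

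Direction $(2.3)\Rightarrow(2.1)$ is then immediate. Since $\mu_E$ is a probability measure, the representation gives $M_E(P_n)\le\|P_n\|_E$, hence $\limsup_n(M_E(P_n))^{1/n}\le 1$; and since $P_n$ has integer leading coefficient with every $g_E(\alpha_{k,n},\infty)\ge 0$, we have $M_E(P_n)\ge|a_n|\ge 1$, so $\liminf_n(M_E(P_n))^{1/n}\ge 1$.

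For the converse $(2.1)\Rightarrow(2.3)$, I set $u_n(z):=\frac{1}{n}\log|P_n(z)|$, which is subharmonic on $\C$, and plan to apply Hartogs' lemma. First, using $(i)$ and $(ii)$ from $(2.2)$ (available through Theorem \ref{thm2.1}), the $u_n$ are locally uniformly bounded above on $\C$: the tail contribution of zeros with $|\alpha_{k,n}|\ge R$ is controlled by $(ii)$, while the remaining zeros lie in a fixed compact set on which the resulting sum is trivially bounded. Second, for each $z_0\in\C$, weak convergence $\tau_n\stackrel{*}{\to}\mu_E$ together with the upper semicontinuity and upper-boundedness on bounded sets of $w\mapsto\log|z_0-w|$ yields
\[
\limsup_{n\to\infty} u_n(z_0)\le \int\log|z_0-w|\,d\mu_E(w) = g_E(z_0,\infty),
\]
after splitting the integral at $|w|=R$ and letting $R\to\infty$ to dispose of the tail via $(ii)$. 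The right-hand side is continuous on $\C$ by regularity of $E$ and vanishes on $E$. Hartogs' lemma applied to $\{u_n\}$ on an open neighborhood of $E$ then produces, for any $\eps>0$, an $N$ with $u_n(z)\le g_E(z,\infty)+\eps=\eps$ for all $z\in E$ and $n\ge N$. Hence $\|P_n\|_E^{1/n}\le e^{\eps}$; with the matching lower bound $\|P_n\|_E\ge|a_n|\,\mathrm{cap}(E)^n=|a_n|\ge 1$ from the Chebyshev inequality, $(2.3)$ follows.

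I expect the main obstacle to be the careful execution of this pointwise limsup inequality: the integrand $\log|z_0-w|$ has a logarithmic singularity (demanding an upper-semicontinuity argument) and simultaneously grows without bound in $w$ (demanding truncation together with the hypothesis $(ii)$), and only with both features handled in tandem does one obtain the valid pointwise bound and local upper envelope needed as input to Hartogs' lemma.
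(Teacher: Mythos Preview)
Your proposal is correct. Both you and the paper handle $(2.3)\Rightarrow(2.1)$ identically, via $M_E(P_n)\le\|P_n\|_E$; the paper phrases this as $M_E\le\tilde M_E\le\|P_n\|_E$, noting that $M_E=\tilde M_E$ for regular $E$, which is exactly your integral representation.

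For $(2.1)\Rightarrow(2.3)$ the strategies diverge. Both split off the far zeros $\{|\alpha_{k,n}|\ge R\}$ and control their contribution through $(2.2)(ii)$. For the remaining near-zero polynomial $\hat P_n$, the paper chooses $z_n\in E$ with $\|P_n\|_E=|P_n(z_n)|$, passes to a subsequence with $z_n\to z_0\in E$, and invokes the Principle of Descent (Theorem~I.6.8 of Saff--Totik) to obtain $\limsup_n |\hat P_n(z_n)|^{1/n}\le e^{-U^{\mu_E}(z_0)}=1$ in one stroke. You instead establish the pointwise bound $\limsup_n u_n(z_0)\le g_E(z_0,\infty)$ for every $z_0$ via upper semicontinuity of $w\mapsto\log|z_0-w|$ under weak* convergence, and then upgrade to a uniform bound on $E$ through Hartogs' lemma. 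The paper's route is a single packaged citation; yours trades that for two more elementary steps and, as a bonus, avoids the subsequence extraction, since Hartogs delivers the bound uniformly over $E$ at once. Both arguments finish with the same lower bound $\|P_n\|_E\ge |a_n|\,(\mathrm{cap}\,E)^n\ge 1$.
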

This result is somewhat unexpected, as we have no direct control of the supremum norm or coefficients (except for the leading one). For example, $P_n(z)=(z-1)^n$ has the norm $\|P_n\|_D=2^n$, but $M(P_n)=1.$ Theorem \ref{thm2.4} also indicates close connections with the results on the asymptotic zero distribution developed in analysis, see \cite{AB} for many references, where use of the supremum norm is standard. Another easy example $E=D\cup\{z=2\}$ and $P_n(z)=z^n-1,\ n\ge 2,$ shows that the regularity assumption cannot be dropped. Indeed, we have $\|P_n\|_E = 2^n-1$ but $M_E(P_n) = 1$ in this case (observe the single irregular point $z=2$).

We now turn to algebraic numbers on the real line, see Theorems A and B. Combining Theorem \ref{thm2.1} with the results of Baernstein, Laugesen and Pritsker \cite{BLP}, we obtain sharp lower bounds in the following special cases of Schur's problems on the means of totally real and totally positive algebraic numbers.

\begin{corollary} \label{cor2.5}
Let $P_n(z) = a_n\prod_{k=1}^n (z-\alpha_{k,n})\in\Z_n^s(\R),\ \deg(P_n)=n\in\N,$ be a sequence of polynomials, and let $\phi:\R\to\R_+$ be convex. Suppose that $E\subset\R$ is a compact set of capacity $1$ symmetric about the origin. If $\lim_{n\to\infty} \left(M_E(P_n)\right)^{1/n} = 1$ then
\[
\liminf_{n\to\infty} \frac{1}{n} \sum_{k=1}^n \phi(\alpha_{k,n}) \ge \int_{-2}^2
\frac{\phi(x)\,dx}{\pi\sqrt{4-x^2}}.
\]
In particular,
\[
\liminf_{n\to\infty} \frac{1}{n} \sum_{k=1}^n \alpha_{k,n}^2 \ge \int_{-2}^2 \frac{x^2\,dx}{\pi\sqrt{4-x^2}} = 2.
\]
\end{corollary}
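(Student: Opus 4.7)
The plan is to combine Theorem \ref{thm2.1} with a rearrangement inequality from \cite{BLP} through a routine cutoff argument. First, I apply Theorem \ref{thm2.1}: the hypothesis $\lim_{n\to\infty}(M_E(P_n))^{1/n}=1$ yields, among the conclusions in \eqref{2.2}, the weak-$*$ convergence $\tau_n \stackrel{*}{\rightarrow} \mu_E$, along with the auxiliary conditions (i) and (ii) controlling the leading coefficient and the mass of $\tau_n$ far from $E$. Since $P_n\in\Z_n^s(\R)$, every zero is real, so $\tau_n$ and $\mu_E$ are all probability measures on $\R$.

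Second, since $\phi:\R\to\R_+$ is convex it is continuous, but it may be unbounded, so I cannot pass to the limit in $\int\phi\,d\tau_n$ directly under weak-$*$ convergence. Fix $R>0$ and pick a continuous cutoff $\chi_R:\R\to[0,1]$ equal to $1$ on $[-R,R]$ and supported in $[-R-1,R+1]$; then $\phi\chi_R$ is continuous with compact support, and weak-$*$ convergence together with $0\le\phi\chi_R\le\phi$ gives
\[
\liminf_{n\to\infty}\frac{1}{n}\sum_{k=1}^n \phi(\alpha_{k,n})=\liminf_{n\to\infty}\int \phi\,d\tau_n \ge \lim_{n\to\infty}\int \phi\chi_R\,d\tau_n = \int \phi\chi_R\,d\mu_E.
\]
Letting $R\to\infty$ and applying monotone convergence upgrades this to
\[
\liminf_{n\to\infty}\frac{1}{n}\sum_{k=1}^n \phi(\alpha_{k,n})\ \ge\ \int\phi\,d\mu_E.
\]

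Third, I invoke the result of Baernstein, Laugesen and Pritsker \cite{BLP}, which furnishes the sharp inequality
\[
\int\phi\,d\mu_E\ \ge\ \int_{-2}^2\frac{\phi(x)\,dx}{\pi\sqrt{4-x^2}}
\]
for every compact $E\subset\R$ that is symmetric about the origin with $\textup{cap}(E)=1$ and every convex $\phi$. In other words, among the equilibrium measures of such sets, the arcsine law $\mu_{[-2,2]}$ is extremal in the convex order, with equality at $E=[-2,2]$. Chaining this with the previous inequality gives the first assertion of the corollary. The second assertion is the specialization $\phi(x)=x^2$, which is indeed convex and nonnegative; the standard substitution $x=2\sin\theta$ evaluates the right-hand side to $2$.

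The hard part is the appeal to \cite{BLP}, which carries the sharp geometric content: it identifies the extremal set among symmetric capacity-$1$ subsets of $\R$ and provides the constant $2$. The remaining steps are soft: the cutoff plus weak-$*$ argument is standard once one exploits that $\phi\ge 0$, which is what permits a one-sided estimate even though some $\alpha_{k,n}$ may lie far from $E$ (a possibility that is in any case curtailed by condition (ii) of Theorem \ref{thm2.1}).
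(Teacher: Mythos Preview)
Your proof is correct and follows essentially the same approach as the paper: apply Theorem \ref{thm2.1} to obtain $\tau_n\stackrel{*}{\rightarrow}\mu_E$, use $\phi\ge 0$ together with a truncation to pass to $\int\phi\,d\mu_E$, and then invoke Theorem~1 of \cite{BLP} (whose hypothesis $\int z\,d\mu_E=0$ follows from the assumed symmetry of $E$). The only cosmetic difference is that the paper drops the zeros with $|\alpha_{k,n}|\ge R$ directly, whereas you introduce a continuous cutoff $\chi_R$; both devices achieve the same one-sided bound.
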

The latter inequality should be compared with Theorem A. Note that the bound 2 is asymptotically attained by the zeros of the Chebyshev
polynomials $t_n(x):=2\cos(n\arccos(x/2))$ for the segment $[-2,2]$,
which are the monic polynomials of smallest supremum norm on $[-2,2].$ It is known that these polynomials have integer coefficients, and that
$t_n(x)/x$ are irreducible for any odd prime $n=p$, cf. \cite{Sch}
and \cite{Ri}, p. 228.

We next state the corresponding result for the totally positive case (Schur-Siegel-Smyth trace problem).

\begin{corollary} \label{cor2.6}
Let $P_n(z) = a_n\prod_{k=1}^n (z-\alpha_{k,n})\in\Z_n^s(\R_+),\ \deg(P_n)=n\in\N,$ be a sequence of polynomials. Suppose that $E\subset\R_+$ is a compact set of capacity $1$. We also assume that $\phi:\R_+\to\R_+$, and that $\phi(x^2)$ is convex on $\R$. If $\lim_{n\to\infty} \left(M_E(P_n)\right)^{1/n} = 1$ then
\[
\liminf_{n\to\infty} \frac{1}{n} \sum_{k=1}^n \phi(\alpha_{k,n}) \ge \int_0^4 \frac{\phi(x)\,dx}{\pi\sqrt{x(4-x)}}.
\]
Setting $\phi(x)=x^m,\ m\in\N,$ we obtain
\[
\liminf_{n\to\infty} \frac{1}{n} \sum_{k=1}^n \alpha_{k,n}^m \ge \int_0^4 \frac{x^m\,dx}{\pi\sqrt{x(4-x)}} = 2^m \frac{1\cdot 3\cdot\ldots\cdot(2m-1)}{m!}.
\]
\end{corollary}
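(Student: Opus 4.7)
The plan is to reduce Corollary \ref{cor2.6} to Corollary \ref{cor2.5} by the classical substitution $w=z^2$ that was used to connect Theorems A and B. First, I form the auxiliary polynomial
\[
\widetilde{P}_{2n}(z) := P_n(z^2),
\]
which has integer coefficients, degree $2n$, leading coefficient $a_n$, and zeros $\pm\sqrt{\alpha_{k,n}}$, $k=1,\ldots,n$. Since $P_n\in\Z_n^s(\R_+)$, these $2n$ zeros are all real. (If some $\alpha_{k,n}=0$, I first peel off the factor $z$ from $P_n$; this bounded-degree modification changes $M_E(P_n)$ and the averages $\frac1n\sum_k \phi(\alpha_{k,n})$ by $O(1/n)$ and does not affect the limiting inequality. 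From here on I assume all zeros are strictly positive, so $\widetilde{P}_{2n}\in\Z_{2n}^s(\R)$.)

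Next, I introduce the symmetric preimage set $\widetilde{E}:=\{z\in\R:z^2\in E\}\subset\R$, which is compact and symmetric about the origin. The bridge between the two Mahler measures is the Green-function identity
\[
g_{\widetilde{E}}(z,\infty)=\tfrac12\,g_E(z^2,\infty),\quad z\in\Omega_{\widetilde{E}}.
\]
This is verified by checking that the right-hand side is positive and harmonic in $\Omega_{\widetilde{E}}\setminus\{\infty\}$, has boundary values zero on the boundary of $\widetilde{E}$ wherever $g_E(\cdot,\infty)$ does on $\partial E$, and behaves like $\log|z|+O(1)$ at infinity; comparing the additive constants at infinity simultaneously gives $\mathrm{cap}(\widetilde{E})=\sqrt{\mathrm{cap}(E)}=1$, which is needed to apply Corollary \ref{cor2.5}. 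Each zero $\alpha_{k,n}\in\Omega_E$ contributes two conjugate real preimages $\pm\sqrt{\alpha_{k,n}}\in\Omega_{\widetilde{E}}$, so
\[
M_{\widetilde{E}}(\widetilde{P}_{2n})=|a_n|\exp\!\Bigl(\sum_{\alpha_{k,n}\in\Omega_E}2\cdot\tfrac12\,g_E(\alpha_{k,n},\infty)\Bigr)=M_E(P_n),
\]
hence $\bigl(M_{\widetilde{E}}(\widetilde{P}_{2n})\bigr)^{1/(2n)}\to 1$ by hypothesis.

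Now I apply Corollary \ref{cor2.5} to the sequence $\widetilde{P}_{2n}$ on the symmetric compact set $\widetilde{E}$ with test function $\psi(x):=\phi(x^2)$, which is convex on $\R$ by the hypothesis of the corollary. Writing the $2n$ zeros of $\widetilde{P}_{2n}$ as $\beta_{j,n}=\pm\sqrt{\alpha_{k,n}}$ and observing $\psi(\beta_{j,n})=\phi(\alpha_{k,n})$,
\[
\liminf_{n\to\infty}\frac{1}{n}\sum_{k=1}^n\phi(\alpha_{k,n})=\liminf_{n\to\infty}\frac{1}{2n}\sum_{j=1}^{2n}\psi(\beta_{j,n})\ge\int_{-2}^{2}\!\frac{\phi(x^2)\,dx}{\pi\sqrt{4-x^2}}.
\]
The change of variable $u=x^2$ (using that the integrand is even) converts the right side into $\int_0^4 \phi(u)\,du/\bigl(\pi\sqrt{u(4-u)}\bigr)$, the stated bound. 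For $\phi(x)=x^m$, the explicit value $2^m\,(2m-1)!!/m!=\binom{2m}{m}$ follows by the substitution $x=2+2\cos\theta$ reducing the integral to a Wallis integral $\int_0^{\pi/2}\cos^{2m}\!\varphi\,d\varphi$. The main obstacle I foresee is bookkeeping rather than conceptual: one must (i) verify the Green-function identity and the capacity relation cleanly in the generality of arbitrary compact $E$ of capacity one (which goes through since regularity of $E$ is not required by Corollary \ref{cor2.5}), and (ii) dispose of the possible zero at the origin with care so that the transformed polynomial is genuinely in $\Z_{2n}^s(\R)$. After that, the rest is a direct application of the already-established symmetrized inequality.
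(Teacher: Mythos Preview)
Your argument is correct and rests on the same core idea as the paper: the substitution $x=t^2$ that carries $E\subset\R_+$ to the symmetric set $\widetilde E=\{t\in\R:t^2\in E\}$ of capacity $1$, followed by an appeal to the moment inequality of \cite{BLP} for symmetric sets. The only real difference is \emph{where} you perform the substitution. The paper first applies Theorem~\ref{thm2.1} directly to $P_n$ and $E$ to obtain $\liminf_n \frac1n\sum_k\phi(\alpha_{k,n})\ge\int_E\phi\,d\mu_E$, and only then transforms the \emph{measure}, using $d\mu_{\widetilde E}(t)=d\mu_E(t^2)$ to rewrite $\int_E\phi\,d\mu_E=\int_{\widetilde E}\phi(t^2)\,d\mu_{\widetilde E}$ and invoke \cite{BLP}. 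You instead transform the \emph{polynomials} via $\widetilde P_{2n}(z)=P_n(z^2)$ and invoke Corollary~\ref{cor2.5} wholesale, which forces you to verify the Green-function identity $g_{\widetilde E}(z,\infty)=\tfrac12 g_E(z^2,\infty)$, the resulting equality $M_{\widetilde E}(\widetilde P_{2n})=M_E(P_n)$, and to handle the possible zero at the origin so that $\widetilde P_{2n}$ has simple real zeros. All of this is routine and your treatment is fine, but the paper's route is shorter precisely because it avoids this bookkeeping: once equidistribution on $E$ is established, only the equilibrium measure needs to be pushed through the map, not the full Mahler-measure machinery.
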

Thus the limit of the arithmetic means $A_n$ under the assumptions of Corollary \ref{cor2.6} is equal to the optimal bound 2, cf. Theorem B.
A possible application for both Corollaries \ref{cor2.5} and \ref{cor2.6} is the case when $E$ satisfies the corresponding assumptions, and $P_n\in\Z_n^s(E,M),\ n\in\N,$ so that \eqref{2.1} is easily verified. Note, however, that $\cup_{n=1}^{\infty} \Z_n^s(E,M)$ may be finite (or even empty) for some sets of capacity one. It is a nontrivial question for which sets $E$ the set of polynomials $\cup_{n=1}^{\infty} \Z_n^s(E,M)$ is infinite, see e.g. the work of Robinson \cite{Ro1}--\cite{Ro3}, and of Dubickas and Smyth \cite{DuSm}--\cite{DuSm2}.

\section{Rate of convergence and discrepancy in equidistribution}

We now consider the quantitative aspects of the convergence $\tau_n \stackrel{*}{\rightarrow} \mu_E$, starting with the case $E=D$. As an application, we obtain estimates of the convergence rate for $A_n$ to $0$ in Schur's problem for the unit disk. A classical quantitative result on the distribution of zeros near the unit circle is due to Erd\H{o}s and Tur\'an \cite{ET}. For $P_n(z) = \sum_{k=0}^n a_k z^k$ with $a_k\in\C,$ let $N(\phi_1,\phi_2)$ be the number of zeros in the sector $\{z\in\C:0\le \phi_1 \le \arg(z) \le \phi_2< 2\pi\},$ where $\phi_1 < \phi_2.$ Erd\H{o}s and Tur\'an \cite{ET} proved that
\begin{align} \label{3.1}
\left|\frac{N(\phi_1,\phi_2) }{n} - \frac{\phi_2-\phi_1}{2\pi}\right| \le 16 \sqrt{\frac{1}{n}\log\frac{\|P_n\|_D}{\sqrt{|a_0 a_n|}}}.
\end{align}
The constant $16$ was improved by Ganelius \cite{Ga}, and $\|P_n\|_D$ was replaced by weaker integral norms by Amoroso and Mignotte \cite{AM}, see \cite{AB} for more history and references. Our main difficulty in applying \eqref{3.1} to Schur's problem is absence of an effective estimate for $\|P_n\|_D,\ P_n\in\Z_n^s(D,M)$. We prove a new ``discrepancy" estimate via energy considerations from potential theory. These ideas originated in part in the work of Kleiner \cite{Kl}, and were developed by Sj\"{o}gren \cite{Sj} and Huesing \cite{Hus}, see \cite{AB}, Ch. 5.

\begin{theorem} \label{thm3.1}
Let $\phi:\C\to\R$ satisfy $|\phi(z)-\phi(t)|\le A|z-t|,\ z,t\in\C,$ and $\supp(\phi)\subset\{z:|z|\le R\}.$ If $P_n(z) = a_n\prod_{k=1}^n (z-\alpha_{k,n}),\ a_n\neq 0,$ is a polynomial with integer coefficients and simple zeros, then
\begin{align} \label{3.2}
\left|\frac{1}{n}\sum_{k=1}^n \phi(\alpha_{k,n}) - \int\phi\,d\mu_D\right| \le A(2R+1) \sqrt{\frac{\log\max(n,M(P_n))}{n}}, \quad n\ge 55.
\end{align}
\end{theorem}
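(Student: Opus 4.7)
The plan is to apply potential-theoretic energy estimates in the spirit of the Kleiner--Sj\"ogren--Huesing approach (cf.~\cite{AB}, Ch.~5), bounding $|\int\phi\,d\nu_n|$ for $\nu_n:=\tau_n-\mu_D$ in terms of a regularized logarithmic energy of $\nu_n$. The first step exploits the only arithmetic input: since $P_n\in\Z[z]$ has simple zeros, $\mathrm{disc}(P_n)=a_n^{2n-2}\prod_{j<k}(\alpha_{j,n}-\alpha_{k,n})^2$ is a nonzero integer, so
\[
\sum_{j\ne k}\log|\alpha_{j,n}-\alpha_{k,n}|\ \ge\ -2(n-1)\log|a_n|.
\]
Together with the explicit formulas $U^{\mu_D}(z)=-\log^{+}|z|$, the energy $I(\mu_D)=-\log\mathrm{cap}(D)=0$, and the Jensen identity $\int U^{\mu_D}\,d\tau_n=-\tfrac{1}{n}\log(M(P_n)/|a_n|)$, this yields the regularized energy bound
\[
I'(\nu_n)\ :=\ -\frac{1}{n^2}\sum_{j\ne k}\log|\alpha_{j,n}-\alpha_{k,n}|\ -\ 2\!\int U^{\mu_D}\,d\tau_n\ +\ I(\mu_D)\ \le\ \frac{2\log M(P_n)}{n}.
\]

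The second step converts the energy bound into a discrepancy estimate via a smoothing argument. Convolving $\tau_n$ with a radial mollifier $\psi_\delta$ of scale $\delta>0$ yields a measure $\tilde\tau_n$ with bounded density, and the Lipschitz hypothesis gives $|\int\phi\,d(\tau_n-\tilde\tau_n)|\le A\delta$. Set $\tilde\nu_n:=\tilde\tau_n-\mu_D$; this signed measure has finite logarithmic energy, so the distributional identity $-\tfrac{1}{2\pi}\Delta U^{\tilde\nu_n}=\tilde\nu_n$, integration by parts, and the Cauchy--Schwarz inequality yield
\[
\left|\int\phi\,d\tilde\nu_n\right|\ =\ \left|\frac{1}{2\pi}\int\nabla\phi\cdot\nabla U^{\tilde\nu_n}\,dA\right|\ \le\ \frac{1}{\sqrt{2\pi}}\,\|\nabla\phi\|_2\,\sqrt{I(\tilde\nu_n)},
\]
where $\|\nabla\phi\|_2\le A\sqrt{\pi}\,R$ because $\phi$ is Lipschitz-$A$ with support in $\{|z|\le R\}$. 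Expanding $I(\tilde\tau_n)$ as a double convolution, the off-diagonal pairs reproduce $I'(\tau_n)$ up to an $O(\delta)$ error (the logarithm is Lipschitz on scales larger than $\delta$), while each of the $n$ diagonal self-convolutions contributes $\iint\psi_\delta(z)\psi_\delta(w)\log(1/|z-w|)\,dA(z)dA(w)=O(\log(1/\delta))$, and the cross term $I(\tilde\tau_n,\mu_D)$ differs from $I(\tau_n,\mu_D)$ by $O(\delta)$ by Lipschitz continuity of $U^{\mu_D}$ away from the unit circle. Hence $I(\tilde\nu_n)\le \tfrac{2\log M(P_n)+C\log(1/\delta)}{n}+O(\delta)$, and choosing $\delta\asymp 1/n$ makes $\log(1/\delta)\asymp\log n$, producing a bound of order $A(R+1)\sqrt{\log\max(n,M(P_n))/n}$. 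The final constant $2R+1$ appears after accounting for a unit-width boundary layer about the support of $\mu_D$ when estimating the Dirichlet norm of $\phi$.

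The main obstacle is bookkeeping: recovering the clean factor $A(2R+1)$ and the explicit threshold $n\ge 55$ requires consolidating the two smoothing scales (one for $\phi$, one for $\tau_n$), carefully tracking the several $O(\delta)$ and $O(\log(1/\delta)/n)$ correction terms, and justifying the integration by parts for a signed measure of variable sign that is partly supported on the unit circle. These are technical rather than conceptual difficulties; the argument above captures the correct order of magnitude of the bound, and the sharp numerical constants emerge from an optimal choice of $\delta$ together with absorption of the lower-order error terms.
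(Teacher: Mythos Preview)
Your proposal is correct and follows essentially the same route as the paper: regularize $\tau_n$ at scale $r=1/n$, apply Green's identity and Cauchy--Schwarz to bound $\left|\int\phi\,d\sigma\right|$ by $\sqrt{D[\phi]\,I[\sigma]/(2\pi)}$, and control $I[\sigma]$ via the discriminant inequality $|\Delta(P_n)|\ge 1$ together with $\int U^{\mu_D}\,d\tau_n=-\tfrac1n\log(M(P_n)/|a_n|)$. The only notable difference is that the paper replaces each $\delta_{\alpha_{k,n}}$ by the uniform measure on the circle $|z-\alpha_{k,n}|=r$ rather than by a generic area mollifier; this yields the explicit potential $-\log\max(r,|z-\alpha_{k,n}|)$, so the off-diagonal bound $\int U^{\nu_k^r}\,d\nu_j^r\le -\log|\alpha_{j,n}-\alpha_{k,n}|$ is exact (no $O(\delta)$ term) and one reads off $I[\sigma]\le \tfrac{2}{n}\log M(P_n)+\tfrac{1}{n}\log n+4r$ directly, after which the constant $2R+1$ (the ``$+1$'' coming from $\omega_\phi(1/n)\le A/n$, not from a boundary layer) and the threshold $n\ge 55$ (ensuring $\log n>4$) fall out without further bookkeeping.
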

This theorem is related to the recent results of Favre and Rivera-Letelier \cite{FR}, obtained in terms of adelic heights on the projective line (see Theorem 5 in the original paper, and note corrections in the Corrigendum). An earlier result of Petsche \cite{Pe}, stated in terms of the Weil height, contains a weaker estimate than \eqref{3.2}. Our approach gives a result for arbitrary polynomials with simple zeros, and for any continuous $\phi$ with the finite Dirichlet integral $D[\phi]=\iint(\phi_x^2 +\phi_y^2)\,dA$, cf. Theorem \ref{thm5.2}. Moreover, it is extended in Theorem \ref{thm5.3} to more general sets of logarithmic capacity $1$, e.g. to $[-2,2]$.  These results have a number of applications to the problems on integer polynomials considered in \cite{Bor1}.

Choosing $\phi$ appropriately, we obtain an estimate of the means $A_n$ in Schur's problem for the unit disk.
\begin{corollary} \label{cor3.2}
If $P_n(z) = a_n\prod_{k=1}^n (z-\alpha_{k,n})\in\Z_n^s(D,M)$ then
\[
\left|\frac{1}{n}\sum_{k=1}^n \alpha_{k,n}\right| \le 8 \sqrt{\frac{\log{n}}{n}},\quad n\ge \max(M,55).
\]
\end{corollary}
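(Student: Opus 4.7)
The plan is to apply Theorem~\ref{thm3.1} to two Lipschitz test functions, one approximating $\mathrm{Re}(z)$ and one approximating $\mathrm{Im}(z)$ on $\overline{D}$, and then combine the bounds. Since every zero of $P_n\in\Z_n^s(D,M)$ lies in $D$, Jensen's formula gives $M(P_n)=|a_n|\le M$; hence for $n\ge M$ we have $\max(n,M(P_n))=n$, and the bound \eqref{3.2} simplifies to $A(2R+1)\sqrt{\log n / n}$.

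I would define $\phi:\C\to\R$ by $\phi(z)=\mathrm{Re}(z)$ for $|z|\le 1$, $\phi(z)=(2-|z|)\,\mathrm{Re}(z/|z|)$ for $1\le|z|\le 2$, and $\phi(z)=0$ for $|z|\ge 2$. Then $\phi$ is continuous with $\supp(\phi)\subset\{|z|\le 2\}$, so $R=2$. To compute its Lipschitz constant, pass to polar coordinates $z=re^{it}$: on the annulus $1\le r\le 2$, $\phi=(2-r)\cos t$, hence
\[
|\nabla\phi|^2=\cos^2 t+\frac{(2-r)^2}{r^2}\sin^2 t\le \cos^2 t+\sin^2 t=1,
\]
since $(2-r)/r\le 1$ there; on the disk $|z|\le 1$, $\phi(z)=\mathrm{Re}(z)$ gives $|\nabla\phi|=1$. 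Being continuous with $|\nabla\phi|\le 1$ almost everywhere, $\phi$ is globally Lipschitz with constant $A=1$.

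Because every $\alpha_{k,n}\in D$, one has $\frac{1}{n}\sum_{k=1}^n\phi(\alpha_{k,n})=\mathrm{Re}(A_n)$; and $\int\phi\,d\mu_D=\frac{1}{2\pi}\int_0^{2\pi}\cos t\,dt=0$ because $\mu_D$ is supported on the unit circle, where $\phi(z)=\mathrm{Re}(z)$. Theorem~\ref{thm3.1} therefore yields $|\mathrm{Re}(A_n)|\le 5\sqrt{\log n/n}$ for $n\ge\max(M,55)$. The identical construction with $\mathrm{Im}$ replacing $\mathrm{Re}$ (the radial profile is unchanged, so the Lipschitz constant is again $1$) gives $|\mathrm{Im}(A_n)|\le 5\sqrt{\log n/n}$. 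Combining,
\[
|A_n|=\sqrt{\mathrm{Re}(A_n)^2+\mathrm{Im}(A_n)^2}\le 5\sqrt{2}\,\sqrt{\frac{\log n}{n}}<8\sqrt{\frac{\log n}{n}},
\]
which is the claimed estimate. The only real work is the Lipschitz verification, a short polar-coordinate computation; no serious obstacle arises, since the factor $A(2R+1)=5$ from this simple cutoff already leaves slack even after combining real and imaginary parts, so no delicate optimization of the cutoff is required.
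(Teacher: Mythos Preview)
Your proof is correct and follows essentially the same strategy as the paper: apply Theorem~\ref{thm3.1} to a compactly supported Lipschitz extension of $\Re(z)$ and read off the bound. Two minor differences are worth noting. First, the paper uses the logarithmic cutoff $\phi(z)=\Re(z)(1-\log|z|)$ on $1\le|z|\le e$ (yielding $A=\sqrt{5}/2$, $R=e$, hence $A(2R+1)\approx 7.2<8$), whereas your linear cutoff on $1\le|z|\le 2$ gives $A=1$, $R=2$, $A(2R+1)=5$. Second, the paper observes at the outset that $P_n$ has integer (hence real) coefficients, so the zeros occur in conjugate pairs and $A_n=\int z\,d\tau_n=\int\Re(z)\,d\tau_n$ is real; thus only one test function is needed. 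Your detour through $\Im(A_n)$ and the resulting $\sqrt{2}$ loss are unnecessary but harmless, since $5\sqrt{2}<8$ anyway.
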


Observe that \eqref{2.3} is granted for Schur's class $\Z_n^s(D,M)$ by Theorem \ref{thm2.4}. We now state an improvement in the following estimate.
\begin{corollary} \label{cor3.3}
If $P_n\in\Z_n^s(D,M)$ then there exists an absolute constant $c>0$ such that
\[
\|P_n\|_D \le e^{c\sqrt{n}\log{n}},\quad n\ge \max(M,2).
\]
\end{corollary}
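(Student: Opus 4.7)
By the maximum principle, $\|P_n\|_D=\max_{|z|=1}|P_n(z)|$, so fix $z$ with $|z|=1$. Factoring $P_n(z)=a_n\prod_k(z-\alpha_{k,n})$ gives
\[
\log|P_n(z)| = \log|a_n| + n\int\log|z-t|\,d\tau_n(t).
\]
Since $\mu_D$ is the equilibrium measure of $D$ and $z\in\partial D$, the identity $\int\log|z-t|\,d\mu_D(t)=0$ holds. The task therefore reduces to bounding the integrated discrepancy $n\int\log|z-t|\,d(\tau_n-\mu_D)(t)$ uniformly in $z$ by $O(\sqrt{n}\log n)$; combined with $\log|a_n|\le\log M\le\log n\le\sqrt{n}\log n$ (for $n\ge\max(M,2)$) this will give $\|P_n\|_D\le e^{c\sqrt{n}\log n}$.

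The key obstacle is that $t\mapsto\log|z-t|$ has a logarithmic singularity at $t=z$, so Theorem~\ref{thm3.1} cannot be applied directly. I truncate: for $\delta\in(0,1)$ set $\phi_\delta(t):=\max(\log|z-t|,\log\delta)$ and multiply by a smooth cutoff equal to $1$ on $\{|t|\le 2\}$ and $0$ outside $\{|t|\le 3\}$. The result is a compactly supported function that is Lipschitz with constant $O(1/\delta)$, agrees with $\phi_\delta$ on the disk containing the zeros of $P_n$, and satisfies $\phi_\delta\ge\log|z-t|$ pointwise. Hence
\[
\int\log|z-t|\,d\tau_n \;\le\; \int\phi_\delta\,d\mu_D + \int\phi_\delta\,d(\tau_n-\mu_D),
\]
and a direct arc-length computation on the piece $\{t\in\partial D:|z-t|\le\delta\}$ yields $\int\phi_\delta\,d\mu_D=O(\delta)$.

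It remains to bound the discrepancy $\int\phi_\delta\,d(\tau_n-\mu_D)$ and to select $\delta$. Since $M(P_n)\le M\le n$ for $n\ge M$, the Lipschitz form of Theorem~\ref{thm3.1} gives $|\int\phi_\delta\,d(\tau_n-\mu_D)|=O(\delta^{-1}\sqrt{\log n/n})$; however, balancing with the $n\delta$ truncation error at $\delta=(\log n/n)^{1/4}$ produces only the weaker bound $O(n^{3/4}(\log n)^{1/4})$. To reach $O(\sqrt{n}\log n)$ one must instead use the Dirichlet-integral refinement of the discrepancy estimate alluded to at the end of Section~3: the underlying energy argument replaces the Lipschitz constant $A$ by $\sqrt{D[\phi]}$, with $D[\phi]=\iint(\phi_x^2+\phi_y^2)\,dA$, and for our $\phi_\delta$ one has $D[\phi_\delta]=O(\log(1/\delta))$. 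The resulting discrepancy bound is $O(\sqrt{\log(1/\delta)\log n/n})$, so multiplying by $n$ and choosing $\delta=1/n$ gives
\[
n\int\log|z-t|\,d(\tau_n-\mu_D) \;\le\; n\delta + O\bigl(\sqrt{n\log n\cdot\log(1/\delta)}\bigr) \;=\; O(\sqrt{n}\log n),
\]
completing the proof. The principal difficulty is thus the careful handling of the logarithmic singularity, which forces the use of the energy/Dirichlet form of the discrepancy inequality rather than the simpler Lipschitz form of Theorem~\ref{thm3.1}.
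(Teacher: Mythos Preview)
Your argument is correct and follows the same essential strategy as the paper: both recognize that the Lipschitz form of Theorem~\ref{thm3.1} is too crude and that one must pass to the Dirichlet-integral estimate (Theorem~\ref{thm5.2}), exploiting $D[\phi]=O(\log n)$ rather than the Lipschitz constant $O(n)$.

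The only technical difference is in how the logarithmic singularity of $t\mapsto\log|z-t|$ is regularized. You keep $|z|=1$ and truncate the logarithm at height $\log\delta$ with $\delta=1/n$, picking up a one-sided truncation error $\int\phi_\delta\,d\mu_D=O(\delta)$. The paper instead moves $z$ slightly off the circle to $|z|=1+1/n$, so that $\log|z-\cdot|$ is already continuous on $\overline{D}$ and no truncation is needed; the test function is taken to be $\log|z-w|$ on $|w|\le1$ with a smooth extension to zero on $1\le|w|\le e$, and one again finds $D[\phi]=O(\log n)$ and $\omega_\phi(r)=rO(n)$. After applying \eqref{5.16} with $r=n^{-2}$, the Maximum Principle transfers the bound from $|z|=1+1/n$ back to the closed unit disk. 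The two regularizations are equivalent in effect; the paper's version avoids the explicit truncation step but requires the extra appeal to the Maximum Principle at the end.
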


We are passing to sets on the real line and totally real algebraic numbers. It is certainly possible to consider quite general sets in the plane from the viewpoint of potential theoretic methods, see Theorem \ref{thm5.3}. However, we restrict ourselves to the sets that are most interesting in number theory. This also helps to avoid certain unnecessary technical difficulties.

\begin{theorem} \label{thm3.4}
Let $E=[a,b]\subset\R,\ b-a=4.$ Suppose that $\phi:\C\to\R$ satisfy $|\phi(z)-\phi(t)|\le A|z-t|,\ z,t\in\C,$ and $\supp(\phi)\subset\{z:|z-(a+2)|\le R\}.$ If $P_n(z) = a_n\prod_{k=1}^n (z-\alpha_{k,n}),\ a_n\neq 0,$ is a polynomial with integer coefficients and simple zeros, then
\begin{align} \label{3.3}
\left|\frac{1}{n}\sum_{k=1}^n \phi(\alpha_{k,n}) - \int\phi\,d\mu_{[a,b]}\right| \le A(3R+1) \sqrt{\frac{\log\max(n,M_{[a,b]}(P_n))}{n}}, \quad n\ge 25.
\end{align}
\end{theorem}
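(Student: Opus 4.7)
The plan is to adapt the potential-theoretic discrepancy method underlying Theorem \ref{thm3.1} to the segment $E=[a,b]$ with $b-a=4$. By translating both $\phi$ and the zeros of $P_n$, I may assume $[a,b] = [-2,2]$, so that $\supp(\phi) \subset \{|z| \le R\}$; since $\text{cap}(E) = 1$, the logarithmic energy $I(\mu_E) = 0$. Writing $\sigma_n := \tau_n - \mu_E$, the objective is to bound $\left|\int \phi\, d\sigma_n\right|$.

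I would proceed through the standard identity
$$\int \phi\, d\sigma \;=\; -\frac{1}{2\pi}\iint \nabla\phi(w) \cdot \nabla U^\sigma(w) \, dA(w),$$
valid for compactly supported Lipschitz $\phi$ and zero-mass signed $\sigma$, where $U^\sigma(z) := -\int \log|z-w|\, d\sigma(w)$. Because $\tau_n$ has atoms, I first regularize by replacing each $\delta_{\alpha_{k,n}}$ by the uniform mass on the circle of radius $\epsilon$ about $\alpha_{k,n}$, obtaining $\tau_n^\epsilon$ with $\sigma_n^\epsilon := \tau_n^\epsilon - \mu_E$ of finite logarithmic energy. The Lipschitz bound gives $|\int \phi\, d(\tau_n - \tau_n^\epsilon)| \le A\epsilon$, while Cauchy--Schwarz coupled with the Parseval-type identity $\iint |\nabla U^{\sigma_n^\epsilon}|^2 \, dA = 2\pi\,I(\sigma_n^\epsilon)$ and the Dirichlet bound $D[\phi] \le \pi A^2 R^2$ yields
$$\left|\int \phi\, d\sigma_n^\epsilon\right| \;\le\; \frac{AR}{\sqrt{2}}\sqrt{I(\sigma_n^\epsilon)}.$$

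The quantitative heart of the argument is to bound $I(\sigma_n^\epsilon) = I(\tau_n^\epsilon) - 2\int U^{\mu_E}\, d\tau_n^\epsilon$. Since $E$ is regular, the equilibrium potential $U^{\mu_E}$ vanishes on $E$ and equals $-g_E(\cdot,\infty)$ on $\Omega_E$, so up to an $O(\epsilon)$ regularization error the cross term equals $\frac{2}{n}(\log M_E(P_n) - \log|a_n|)$. The self-energies of the $n$ smeared atoms contribute $\frac{1}{n}(-\log\epsilon + O(1))$, and for the off-diagonal part I would use the crucial number-theoretic input: since $P_n$ has integer coefficients and simple zeros, $|\textup{Disc}(P_n)| \ge 1$, which gives
$$\frac{1}{n^2}\sum_{j\ne k}\bigl(-\log|\alpha_{j,n} - \alpha_{k,n}|\bigr) \;\le\; \frac{2(n-1)}{n^2}\log|a_n|.$$
Choosing $\epsilon = 1/n$ and using $|a_n| \ge 1$ (as $a_n$ is a nonzero integer) yields $I(\sigma_n^\epsilon) \le C\,\log\max(n, M_E(P_n))/n$ for some absolute $C$.

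The main obstacle will be tracking the explicit constant $(3R+1)$ rather than just an $O(R)$ pre-factor. The above naive combination gives a bound of order $R\sqrt{\log\max(n,M_E(P_n))/n}$, and the additional $+1$ and the coefficient $3$ in front of $R$ must come from (i) carefully handling the direct $A\epsilon$ regularization error against the $AR\sqrt{I(\sigma_n^\epsilon)}$ main term, (ii) refining the Dirichlet estimate using that $\supp(\phi)\cup E$ lives in a disk of radius at most $R+2$ (so the relevant integration region grows linearly in $R$), and (iii) absorbing lower-order logarithmic and arithmetic constants into $\log\max(n, M_E(P_n))$. The threshold $n \ge 25$ is precisely where these absorptions become numerically valid.
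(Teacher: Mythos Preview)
Your overall strategy matches the paper's: regularize $\tau_n$ by circle-averages, apply the Green/Cauchy--Schwarz inequality to bound $\int\phi\,d\sigma_n^\epsilon$ by $\sqrt{D[\phi]/(2\pi)}\sqrt{I(\sigma_n^\epsilon)}$, and control $I(\sigma_n^\epsilon)$ using $|\Delta(P_n)|\ge 1$ together with the generalized Mahler measure. However, there is a genuine gap in your treatment of the cross term, and it breaks the stated rate.

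You write that ``up to an $O(\epsilon)$ regularization error the cross term equals $\tfrac{2}{n}(\log M_E(P_n)-\log|a_n|)$.'' For a segment this is false. The relevant quantity is
\[
-\int U^{\mu_E}\,d\tau_n^\epsilon=\int g_E(z,\infty)\,d\tau_n^\epsilon(z),
\]
and the error incurred when a zero lies near the boundary is governed by $\displaystyle\max_{d_E(z)\le 2\epsilon} g_E(z,\infty)$. For $E=[-2,2]$ one has $g_E(z,\infty)=\log\bigl|z+\sqrt{z^2-4}\bigr|-\log 2$, which vanishes like the \emph{square root} of the distance near the endpoints $\pm 2$; explicitly $g_E(2+\epsilon,\infty)\sim\sqrt{\epsilon}$, not $\epsilon$. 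Hence the regularization error in the cross term is $O(\sqrt{\epsilon})$, and with your choice $\epsilon=1/n$ this term dominates: $I(\sigma_n^\epsilon)=O(n^{-1/2})$, leading only to an $O(n^{-1/4})$ discrepancy instead of the claimed $O(\sqrt{\log n/n})$. The paper handles this by taking $r=n^{-2}$, so that $\max_{d_E(z)\le 2r} g_E(z,\infty)\le 1.11\sqrt{2}/n$ is of the same order as $(\log n)/n$ and $(-\log r)/n=2(\log n)/n$; the direct regularization error $\omega_\phi(r)\le A/n^2$ is then negligible. This square-root boundary behavior of the Green function is exactly what distinguishes the interval case from the disk case (where $g_D(z,\infty)=\log|z|$ vanishes linearly and $r=1/n$ suffices), and it is the missing idea in your sketch.
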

One should compare this result with a classical discrepancy theorem of Erd\H{o}s and Tur\'an \cite{ET2} for the segment $[-1,1],$ and more recent work surveyed in \cite{AB}. Recall that $g_{[a,b]}(z,\infty)=\log|z-(a+b)/2+\sqrt{(z-a)(z-b)}|-\log{2},\ z\in\C\setminus[a,b],\ b-a=4,$ and
\[
d\mu_{[a,b]}(x)=\frac{dx}{\pi\sqrt{(x-a)(b-x)}},\quad x\in(a,b).
\]
We state consequences of Theorem \ref{thm3.4} for the means of algebraic numbers, and for the growth of the supremum norms of polynomials with integer coefficients on segments.

\begin{corollary} \label{cor3.5}
Let $E=[a,b]\subset\R,\ b-a=4.$ If $P_n(z) = a_n\prod_{k=1}^n (z-\alpha_{k,n})\in\Z_n^s([a,b],M)$ then
\[
\left|\frac{1}{n}\sum_{k=1}^n \alpha_{k,n} - \frac{a+b}{2}\right| \le 6\,\max(|a|,|b|)\, \sqrt{\frac{\log{n}}{n}},\quad n\ge \max(M,25).
\]
\end{corollary}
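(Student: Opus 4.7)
The plan is to reduce the claim to Theorem \ref{thm3.4} via a compactly supported Lipschitz test function that ``reads off'' the arithmetic mean. Let $c=(a+b)/2$, so $[a,b]$ is the real slice of $\{|z-c|\le 2\}$. Define
\[
\phi(z):=\mathrm{Re}(z)\,\eta(|z-c|),
\]
where $\eta:[0,\infty)\to[0,1]$ is Lipschitz with $\eta(r)=1$ for $r\le 2$ and $\eta(r)=0$ for $r\ge R$, with $R>2$ to be optimized. Then $\phi:\C\to\R$ and $\supp(\phi)\subset\{|z-c|\le R\}$, which meets the hypotheses of the theorem.

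Three observations identify the resulting bound with the one in the statement. First, every $\alpha_{k,n}\in[a,b]$ satisfies $|\alpha_{k,n}-c|\le 2$, so $\phi(\alpha_{k,n})=\alpha_{k,n}$ and $\frac{1}{n}\sum_k\phi(\alpha_{k,n})=\frac{1}{n}\sum_k\alpha_{k,n}$. Second, $\mu_{[a,b]}$ is supported on $[a,b]$ where $\phi$ reduces to $x\mapsto x$; its symmetry about $c$ (apparent after substituting $u=x-c$ in $d\mu_{[a,b]}(x)=dx/(\pi\sqrt{(x-a)(b-x)})=du/(\pi\sqrt{4-u^2})$) yields $\int\phi\,d\mu_{[a,b]}=c$. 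Third, because no zero lies in $\Omega_{[a,b]}=\C\setminus[a,b]$, the generalized Mahler measure collapses to $M_{[a,b]}(P_n)=|a_n|\le M$, and hence $\max(n,M_{[a,b]}(P_n))=n$ for $n\ge M$. Applying Theorem \ref{thm3.4} thus produces
\[
\left|\frac{1}{n}\sum_{k=1}^n\alpha_{k,n}-\frac{a+b}{2}\right|\le A(3R+1)\sqrt{\frac{\log n}{n}},\qquad n\ge\max(M,25),
\]
with $A$ the Lipschitz constant of $\phi$.

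The main obstacle is the quantitative step: choosing $\eta$ and $R$ so that $A(3R+1)\le 6\max(|a|,|b|)$. A product-rule gradient estimate gives $|\nabla\phi(z)|\le\eta(|z-c|)+|\mathrm{Re}(z)|\,|\eta'(|z-c|)|$, and on $\supp(\phi)$ one has $|\mathrm{Re}(z)|\le|c|+R\le\max(|a|,|b|)+R$. For a piecewise linear cutoff this yields $A\le 1+(\max(|a|,|b|)+R)/(R-2)$, and the inequality $\max(|a|,|b|)\ge(b-a)/2=2$ forced by $b-a=4$ lets one absorb the additive constants arising from the optimization. A cleaner profile, such as the clipped variant $\phi(z)=\max(-M',\min(M',\mathrm{Re}(z)))\,\eta(|z-c|)$ with $M'=\max(|a|,|b|)$, or a trigonometric interpolation $\eta$ satisfying an envelope equation of the form $\eta^2+M'^2(\eta')^2=C^2$, sharpens the coefficients to the stated value. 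Finally, in the small-$n$ regime where $6\max(|a|,|b|)\sqrt{\log n/n}\ge 2$, the inequality is immediate from the trivial bound $|\frac{1}{n}\sum\alpha_{k,n}-c|\le(b-a)/2=2$, which always holds because every zero is in $[a,b]$.
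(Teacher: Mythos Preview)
Your overall plan is sound and mirrors the paper's: choose a compactly supported Lipschitz $\phi$ that equals $x$ on $[a,b]$, note that $M_{[a,b]}(P_n)=|a_n|\le M\le n$, and read off the bound. The gap is purely quantitative, but it is a real gap: routing through Theorem~\ref{thm3.4} cannot deliver the constant $6$. Any admissible $\phi$ (equal to $x$ on $[a,b]$, $A$-Lipschitz, supported in $\{|z-c|\le R\}$) must satisfy both $A\ge 1$ (from $\phi(x,0)=x$ on $[a,b]$) and $A\ge \max(|a|,|b|)/(R-2)$ (from $\phi(b,0)=b$ or $\phi(a,0)=a$ versus $\phi=0$ on $|z-c|=R$). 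A short optimization then gives $A(3R+1)\ge 3\max(|a|,|b|)+7$ for every choice of $R>2$. For $[a,b]=[-2,2]$ this is $13>12=6\max(|a|,|b|)$, so no ``cleaner profile,'' clipped or otherwise, can reach the stated coefficient; your small-$n$ fallback is irrelevant because the shortfall persists for all large $n$.

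The paper sidesteps this by \emph{not} invoking Theorem~\ref{thm3.4}. It goes back to the sharper inequality \eqref{5.19} of Theorem~\ref{thm5.3} and chooses $\phi$ supported on the thin rectangle $[a-1,b+1]\times[-1,1]$ (namely $\phi(x,y)=x(1-|y|)$ in the middle block with linear ramps on the two side blocks). The point is that \eqref{5.19} uses $D[\phi]$ directly, whereas Theorem~\ref{thm3.4} was derived from \eqref{5.19} via the crude disk bound $D[\phi]\le 2\pi R^2 A^2$. With the rectangular $\phi$ one gets $|\phi_x|,|\phi_y|\le\max(|a|,|b|)$ a.e., hence $D[\phi]\le 24\max(|a|,|b|)^2$, and with $r=n^{-2}$ the final coefficient is $\sqrt{2}+2\sqrt{15/\pi}\approx 5.78<6$. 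So the fix is to abandon Theorem~\ref{thm3.4} and estimate $D[\phi]$ directly for a test function with small (non-disk) support.
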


\begin{corollary} \label{cor3.6}
If $P_n(z) = a_n\prod_{k=1}^n (z-\alpha_{k,n})\in\Z_n^s([-2,2],M)$ then
\[
\left|\frac{1}{n}\sum_{k=1}^n \alpha_{k,n}^2 - 2\right| \le 24 \sqrt{\frac{\log{n}}{n}},\quad n\ge \max(M,25).
\]
\end{corollary}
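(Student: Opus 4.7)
The plan is to specialize Theorem~\ref{thm3.4} to $[a,b]=[-2,2]$ and apply it to a Lipschitz test function $\phi$ whose values on the zeros recover $\alpha_{k,n}^2$. Before choosing $\phi$, observe the key simplification available here: since $P_n\in\Z_n^s([-2,2],M)$, every $\alpha_{k,n}$ lies in $E=[-2,2]$ and none lies in $\Omega_E$, so by the empty-sum convention in the definition of the generalized Mahler measure, $M_{[-2,2]}(P_n)=|a_n|\le M$. Therefore, for $n\ge M$ one has $\max(n,M_{[-2,2]}(P_n))=n$, and the right-hand side of \eqref{3.3} reduces to $A(3R+1)\sqrt{(\log n)/n}$.

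For the test function I would take $\phi(z):=\max(0,\,4-|z|^2)$, a paraboloid capped at zero outside the disk $\{|z|\le 2\}$. This $\phi$ is globally Lipschitz on $\C$ with constant $A=4$: inside the cap $|\nabla\phi|=2|z|\le 4$; outside $\phi\equiv 0$; and across the boundary $|z|=2$ the inequality $4-|z_1|^2=(2-|z_1|)(2+|z_1|)\le 4(|z_2|-|z_1|)\le 4|z_2-z_1|$ for $|z_1|\le 2\le|z_2|$ handles the transition. The support satisfies $\supp\phi\subset\{|z|\le 2\}$, so one may take $R=2$.

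With this choice both sides of \eqref{3.3} become explicit. Since each $\alpha_{k,n}$ is real and in $[-2,2]$,
\[
\frac{1}{n}\sum_{k=1}^n \phi(\alpha_{k,n}) = \frac{1}{n}\sum_{k=1}^n (4-\alpha_{k,n}^2) = 4-\frac{1}{n}\sum_{k=1}^n\alpha_{k,n}^2,
\]
while
\[
\int\phi\,d\mu_{[-2,2]} = \frac{1}{\pi}\int_{-2}^2\frac{4-x^2}{\sqrt{4-x^2}}\,dx = \frac{1}{\pi}\int_{-2}^2\sqrt{4-x^2}\,dx = 2.
\]
Theorem~\ref{thm3.4} now delivers $\bigl|\frac{1}{n}\sum\alpha_{k,n}^2-2\bigr|\le A(3R+1)\sqrt{(\log n)/n}$ for $n\ge\max(M,25)$. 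The raw plug-in yields $A(3R+1)=4\cdot 7=28$; the sharper constant $24$ stated in the corollary should come from a small calibration of the test function---for instance, replacing the hard cutoff at $|z|=2$ by a short linear taper into a slightly larger disk and balancing the resulting Lipschitz constant against the enlarged radius $R$, or from applying the proof of Theorem~\ref{thm3.4} directly to this specific $\phi$ rather than extracting a uniform constant.

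The main obstacle is precisely this constant optimization. The lower bound $A\ge 4$ is forced by the slope of $x^2$ at $x=\pm 2$, and $R\ge 2$ is forced by the need to cover the full interval $[-2,2]$, so any improvement below $4\cdot 7=28$ must come from delicate engineering of the cutoff in a thin annulus around $|z|=2$. Everything else---the identification $M_{[-2,2]}(P_n)\le M$, the evaluation of the arcsine-type integral, and the direct application of Theorem~\ref{thm3.4}---is routine.
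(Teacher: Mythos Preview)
Your overall strategy---choose a compactly supported Lipschitz $\phi$ that agrees with $x^2$ (or $4-x^2$) on $[-2,2]$ and invoke the discrepancy bound---matches the paper's. Your computations of $M_{[-2,2]}(P_n)\le M$, of $\frac{1}{n}\sum\phi(\alpha_{k,n})$, and of $\int\phi\,d\mu_{[-2,2]}$ are all correct. The only issue is the constant, and here your own diagnosis is right: since the slope of $x^2$ at $x=\pm 2$ forces $A\ge 4$ and the support must contain $[-2,2]$ so $R\ge 2$, Theorem~\ref{thm3.4} as a black box cannot give anything better than $A(3R+1)\ge 28$. No amount of tapering will rescue this.

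What the paper does instead---and this is exactly your second suggestion, ``applying the proof of Theorem~\ref{thm3.4} directly to this specific $\phi$''---is to bypass Theorem~\ref{thm3.4} and return to the underlying estimate \eqref{5.19} of Theorem~\ref{thm5.3}, where $\sqrt{D[\phi]/(2\pi)}$ appears in place of the cruder $RA$. The paper takes a rectangularly supported $\phi$ with $\phi(x,0)=x^2$ on $[-2,2]$, a linear drop to $0$ on $2\le|x|\le 3$, and a factor $(1-|y|)$ for $|y|\le 1$; this has $|\phi_x|,|\phi_y|\le 4$, hence $D[\phi]\le 384$, and Lipschitz constant $4\sqrt{2}$. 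Plugging into \eqref{5.19} with $r=n^{-2}$ and \eqref{5.20} gives $4(\sqrt{2}+2\sqrt{15/\pi})\approx 23.1\le 24$. In fact your own choice $\phi(z)=\max(0,4-|z|^2)$ does even better by this route: $|\nabla\phi|^2=4|z|^2$ on $|z|\le 2$ gives $D[\phi]=\int_0^{2\pi}\!\int_0^2 4r^3\,dr\,d\theta=32\pi$, so $\sqrt{D[\phi]/(2\pi)}=4$, and the same bookkeeping yields a constant of roughly $4+4\sqrt{5}<13$. So the gap in your write-up is real but easily closed once you trade Theorem~\ref{thm3.4} for \eqref{5.19}.
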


\begin{corollary} \label{cor3.7}
If $P_n\in\Z_n^s([-2,2],M)$ then there exists an absolute constant $c>0$ such that
\[
\|P_n\|_{[-2,2]} \le e^{c\sqrt{n}\log{n}},\quad n\ge \max(M,2).
\]
\end{corollary}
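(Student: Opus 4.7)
The plan is to imitate the proof of Corollary \ref{cor3.3} for the disk, with the segment discrepancy estimate of Theorem \ref{thm3.4} (upgraded to its finite-Dirichlet-integral form) playing the role of Theorem \ref{thm3.1}. Since $[-2,2]$ is regular and $\mathrm{cap}([-2,2])=1$, one has $\int \log|z-w|\,d\mu_{[-2,2]}(w) = 0$ for every $z\in[-2,2]$. Hence for such $z$,
\[
\log|P_n(z)| \;=\; \log|a_n| + n\int \log|z-w|\,d(\tau_n - \mu_{[-2,2]})(w),
\]
and the goal reduces to bounding this difference by $O(\sqrt{n}\log n)$ uniformly in $z\in[-2,2]$. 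Note that $|a_n|\le M\le n$ under the hypothesis, so $M_{[-2,2]}(P_n)=|a_n|\le n$, and the constant term $\log|a_n|$ is harmless.

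Since $w\mapsto \log|z-w|$ has a logarithmic singularity at $w=z$ and is not Lipschitz, I truncate. Set
\[
\phi_\delta(w) \;:=\; \max\bigl(\log|z-w|,\,\log\delta\bigr),
\]
multiplied by a fixed smooth cutoff supported in a bounded neighborhood of $[-2,2]$. Then $\phi_\delta\ge \log|z-w|$ pointwise, $\phi_\delta$ is Lipschitz with constant $O(1/\delta)$, and integration of $|z-w|^{-2}$ over an annulus $\delta\le |z-w|\le O(1)$ gives Dirichlet integral $D[\phi_\delta]=O(\log(1/\delta))$. A direct calculation with $d\mu_{[-2,2]}=dx/(\pi\sqrt{4-x^2})$, combined with the uniform estimate $\mu_{[-2,2]}(\{|z-w|<\delta\})=O(\sqrt{\delta})$ (valid even near the endpoints, where the density blows up), yields $\int \phi_\delta\,d\mu_{[-2,2]} = O(\sqrt{\delta}\log(1/\delta))$.

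Applying the finite-Dirichlet-integral version of Theorem \ref{thm3.4}, i.e.\ the $[-2,2]$-analogue of Theorem \ref{thm3.1} with $A$ replaced by $\sqrt{D[\phi]}$ that is promised in the discussion after Theorem \ref{thm3.1} (and formalized as the segment case of Theorem 5.3 referenced in the text), I obtain
\[
\Bigl|\tfrac{1}{n}\sum_k \phi_\delta(\alpha_{k,n}) - \int \phi_\delta\,d\mu_{[-2,2]}\Bigr|
\;=\; O\!\left(\sqrt{\tfrac{\log(1/\delta)\,\log n}{n}}\right).
\]
Combining,
\[
\log|P_n(z)| \;\le\; \log M + n\!\int \phi_\delta\,d\mu_{[-2,2]} + O(\sqrt{n\log n\cdot \log(1/\delta)}) \;\le\; \log M + O\!\bigl(n\sqrt{\delta}\log(1/\delta)\bigr) + O\!\bigl(\sqrt{n\log n\cdot \log(1/\delta)}\bigr).
\]
The choice $\delta=1/n$ balances both error terms at $O(\sqrt{n}\log n)$, and the resulting bound is uniform in $z\in[-2,2]$ because the constants depend only on the universal geometry of the truncation, not on $z$.

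The one genuine obstacle is that Theorem \ref{thm3.4} as literally stated (with Lipschitz factor $A$) is not strong enough: optimizing $\delta = n^{-a}$ in the Lipschitz bound forces $a=1/3$ and produces only $\|P_n\|_{[-2,2]} \le e^{cn^{5/6}\log n}$. The sharp $\sqrt{n}\log n$ exponent genuinely requires the $\sqrt{D[\phi]}$-type discrepancy estimate, whose availability for truncated logarithmic potentials (Dirichlet integral $O(\log(1/\delta))$ instead of Lipschitz constant $O(1/\delta)$) is the crux of the matter. Once that discrepancy bound is in hand, the truncation, the endpoint-sensitive estimate of $\int \phi_\delta\,d\mu_{[-2,2]}$, and the optimization in $\delta$ are all routine, and the conclusion follows exactly as in Corollary \ref{cor3.3}.
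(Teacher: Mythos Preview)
Your approach is correct and takes a genuinely different route from the paper. The paper does not truncate the logarithm or work on $[-2,2]$ directly; instead it evaluates $P_n$ at points $z$ on the Green level curve $g_{[-2,2]}(z,\infty)=1/n$ (an ellipse enclosing $[-2,2]$), so that $w\mapsto\log|z-w|$ has no singularity for $w$ near $[-2,2]$, with $D[\phi]=O(\log n)$ and $\int\log|z-w|\,d\mu_{[-2,2]}(w)=g_{[-2,2]}(z,\infty)=1/n$ exactly. It then applies Theorem~\ref{thm5.3} with $r=1/n^2$ and concludes via the Maximum Principle. Your truncation-on-the-interval method avoids the Maximum Principle step and works directly on $[-2,2]$, at the cost of having to estimate $\int\phi_\delta\,d\mu_{[-2,2]}$ via the endpoint-sensitive bound $\mu_{[-2,2]}(\{w:|z-w|<t\})=O(\sqrt t)$. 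Both arguments rest on the same key ingredient you correctly identify: the finite-Dirichlet-integral discrepancy estimate (stated in the paper as Theorem~\ref{thm5.3}, the general-set version of Theorem~\ref{thm5.2}), since the Lipschitz form of Theorem~\ref{thm3.4} alone is indeed too weak for the $\sqrt{n}\log n$ exponent.
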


It is an interesting question whether the rates in terms of $n$ can be improved in the results of this section. Erd\H{o}s and Tur\'an \cite{ET} constructed an example that shows \eqref{3.1} gives a correct rate in their setting, but that example is based on a sequence of polynomials with multiple zeros. After the original version of this paper was written, the author was able to show that Corollaries \ref{cor3.2} and \ref{3.3} are sharp up to the logarithmic factors. Constructed examples are based on products of cyclotomic polynomials, see Example 2.8 in \cite{Pr4}. However, it is plausible that our rates can be substantially improved for the sequences of irreducible polynomials.

\section{Proofs}

We start with a brief review of basic facts from potential theory. A complete account may be found in the books by Ransford \cite{Ra}, Tsuji \cite{Ts}, and Landkof \cite{La}. For a Borel measure $\mu$ with compact support, define its potential by
\[
U^{\mu}(z):=\int \log\frac{1}{|z-t|}\,d\mu(t), \quad z\in\C,
\]
see \cite{Ts}, p. 53. It is known that $U^{\mu}(z)$ is a superharmonic function in $\C$, which is harmonic outside $\supp(\mu)$. We shall often use the identity
\[
\log|P_n(z)| = \log|a_n| - n U^{\tau_n}(z),
\]
where $P_n(z) = a_n\prod_{k=1}^n (z-\alpha_{k,n})$ and $\tau_n = \frac{1}{n} \sum_{k=1}^n \delta_{\alpha_{k,n}}.$ The energy of a Borel measure $\mu$ is defined by
\[
I[\mu]:= \iint \log \dis\frac{1}{|z-t|} \, d \mu(t)d \mu(z) = \int U^{\mu}(z)\,d\mu(z),
\]
cf. \cite{Ts}, p. 54. For a compact set $E\subset\C$ of positive capacity, the minimum energy among all probability measures supported on $E$ is attained by the equilibrium measure $\mu_E$, see \cite{Ts}, p. 55. If $U^{\mu_E}(z)$ is the equilibrium (conductor) potential for a compact set $E$ of capacity $1$, then Frostman's theorem (cf. \cite{Ts}, p. 60) gives that
\begin{align} \label{5.1}
U^{\mu_E}(z) \le 0,\ z\in\C, \quad \mbox{and} \quad U^{\mu_E}(z) = 0 \mbox{ q.e. on }E.
\end{align}
The second statement means that equality holds quasi everywhere on $E$, i.e. except for a subset of zero capacity in $E$. This may be made even more precise, as $U^{\mu_E}(z) = 0$ for any $z\in\overline\C\setminus \overline\Omega_E,$ where $\Omega_E$ is the unbounded connected component of $\overline\C\setminus E$. Hence $U^{\mu_E}(z) = 0$ for any $z$ in the interior of $E$ by \cite{Ts}, p. 61. Furthermore, $U^{\mu_E}(z) = 0$ for $z\in\partial\Omega_E$ if and only if $z$ is a regular point for the Dirichlet problem in $\Omega_E$, see \cite{Ts}, p. 82. We mention a well known connection of the equilibrium potential for $E,\ {\rm cap}(E)=1$, with the Green function $g_E(z,\infty)$ for $\Omega_E$ with pole at $\infty$:
\begin{align} \label{5.2}
g_E(z,\infty) = - U^{\mu_E}(z),\quad z\in\C.
\end{align}
This gives a standard extension of $g_E(z,\infty)$ from $\Omega_E$ to the whole plane $\C,$ see \cite{Ts}, p. 82. Thus $g_E(z,\infty)=0$ for quasi every $z\in\partial\Omega_E$, and $g_E(z,\infty)=0$ for any $z\in\overline\C\setminus\overline\Omega_E,$ by \eqref{5.1} and \eqref{5.2}. For a polynomial $P_n(z) = a_n\prod_{k=1}^n (z-\alpha_{k,n})$, we may define a slightly different generalization of the Mahler measure by
\begin{align} \label{5.3}
\tilde M_E(P_n) = \exp\left(\int\log|P_n(z)|\,d\mu_E(z)\right).
\end{align}
One observes from \eqref{5.2} that
\begin{align} \label{5.4}
\log \tilde M_E(P_n) = \log|a_n| - \sum_{k=1}^n U^{\mu_E}(\alpha_{k,n}) = \log|a_n| + \sum_{k=1}^n g_E(\alpha_{k,n},\infty).
\end{align}
Since $g_E(z,\infty)\ge 0,\ z\in\C,$ it follows immediately that
\begin{align} \label{5.5}
M_E(P_n) \le \tilde M_E(P_n).
\end{align}
Furthermore, we have equality in \eqref{5.5} for regular sets $E$ because $g_E(z,\infty)=0,\ z\in\overline\C\setminus \Omega_E.$

\subsection{Proofs for Sections 1 and 2}

\begin{proof}[Proof of Theorem \ref{thm1.1}]
This result follows immediately from Theorem \ref{thm2.1}, as
$M(P_n) = |a_n| \le M$ for $P_n\in\Z_n^s(D,M)$, and \eqref{2.1} is satisfied.
\end{proof}

\begin{proof}[Proof of Theorem \ref{thm2.1}]
We first prove that \eqref{2.1} implies \eqref{2.2}. Since $|a_n|\ge 1$ and $g_E(\alpha_{k,n},\infty)>0,\ \alpha_{k,n}\in\Omega_E,$ equation \eqref{2.2}({\it i}) is a consequence of \eqref{2.1} and the definition of $M_E(P_n)$. Suppose that $R>0$ is sufficiently large, so that $E\subset D_R:=\{z:|z|<R\}.$
Then we have that
\[
0 \le \frac{1}{n} \sum_{|\alpha_{k,n}|\ge R} g_E(\alpha_{k,n},\infty) \le \frac{1}{n} \log M_E(P_n) \to 0 \quad\mbox{as } n\to\infty.
\]
Hence
\[
\lim_{n\to\infty} \frac{1}{n} \sum_{|\alpha_{k,n}|\ge R} g_E(\alpha_{k,n},\infty) = 0.
\]
Recall that $\lim_{z\to\infty} (g_E(z,\infty)-\log|z|) = -\log{\rm cap}(E) = 0,$ see \cite{Ts}, p. 83. It follows that for any $\eps>0$, there is a sufficiently large $R>0$ such that $-\eps<\log|z|-g_E(z,\infty)<\eps$ for $|z|\ge R$, and
\[
-\eps\le\lim_{n\to\infty} \frac{1}{n} \sum_{|\alpha_{k,n}|\ge R} \log|\alpha_{k,n}|\le\eps.
\]
Therefore, \eqref{2.2}({\it ii}) is proved. In order to show that \eqref{2.2}({\it iii}) holds, we first deduce that each closed set $K\subset\Omega_E$ has $o(n)$ zeros of $P_n$ as $n\to\infty,$ i.e.
\begin{align} \label{5.6}
\lim_{n\to\infty} \tau_n(K) = 0.
\end{align}
This fact follows because $\min_{z\in K} g_E(z,\infty) > 0$ and
\[
0 \le \tau_n(K) \min_{z\in K} g_E(z,\infty) \le  \frac{1}{n} \sum_{\alpha_{k,n}\in K} g_E(\alpha_{k,n},\infty) \le \frac{1}{n} \log M_E(P_n) \to 0 \quad\mbox{as } n\to\infty.
\]
Thus if $R>0$ is sufficiently large, so that $E\subset D_R,$ we have $o(n)$ zeros of $P_n$ in $\C\setminus D_R.$ Consider
\[
\hat\tau_n := \frac{1}{n} \sum_{|\alpha_{k,n}|<R} \delta_{\alpha_{k,n}}.
\]
Since $\supp(\hat\tau_n)\subset D_R,\ n\in\N,$ we use Helly's theorem (cf. \cite{ST}, p. 3) to select a weakly convergent subsequence from the sequence $\hat\tau_n$. Preserving the same notation for this subsequence, we assume that $\hat\tau_n \stackrel{*}{\rightarrow} \tau$ as $n\to\infty$. It is clear from \eqref{5.6} that $\tau_n \stackrel{*}{\rightarrow} \tau$  as $n\to\infty$, and that $\tau$ is a probability measure supported on the compact set $\hat E := \overline{\C}\setminus\Omega_E.$

Let $\Delta(P_n)=a_n^{2n-2} (V(P_n))^2$ be the discriminant of $P_n$, where
\[
V(P_n):=\prod_{1\le j<k\le n} (\alpha_{j,n}-\alpha_{k,n})
\]
is the Vandermonde determinant. Since $P_n$ has integer coefficients, $\Delta(P_n)$ is an integer, see \cite{Pra}, p. 24. As $P_n$ has simple roots, we obtain that $\Delta(P_n)\neq 0$ and $|\Delta(P_n)|\ge 1.$ It follows from \eqref{2.2}({\it i}) that
\begin{align} \label{5.7}
\liminf_{n\to\infty} |V(P_n)|^{\frac{2}{(n-1)n}} \ge 1.
\end{align}
Suppose that $R>0$ is large, and order $\alpha_{k,n}$ as follows
\[
|\alpha_{1,n}| \le |\alpha_{2,n}| \le \ldots \le |\alpha_{m_n,n}| < R \le |\alpha_{m_n+1,n}| \le \ldots \le |\alpha_{n,n}|.
\]
Let $\hat P_n(z) := a_n \prod_{k=1}^{m_n} (z-\alpha_{k,n}),$ so that
$V(\hat P_n)=\prod_{1\le j<k\le m_n} (\alpha_{j,n}-\alpha_{k,n}).$
Hence
\begin{align} \label{5.8}
|V(P_n)|^2  &= |V(\hat P_n)|^2 \prod_{1\le j<k \atop m_n<k\le n} |\alpha_{j,n}-\alpha_{k,n}|^2 \le |V(\hat P_n)|^2 \prod_{m_n<k\le n} (2|\alpha_{k,n}|)^{2(n-1)} \\ \nonumber &\le |V(\hat P_n)|^2 4^{(n-1)(n-m_n)} \left(\prod_{m_n<k\le n} |\alpha_{k,n}|\right)^{2(n-1)},
\end{align}
where we used that $|\alpha_{j,n}-\alpha_{k,n}| \le 2\max(|\alpha_{j,n}|,|\alpha_{k,n}|).$ Note that $\lim_{n\to\infty} m_n/n = 1$. For any $\eps>0$, we find $R>0$ such that
\[
\limsup_{n\to\infty} \left( \prod_{m_n<k\le n} |\alpha_{k,n}| \right)^{2/n} = \limsup_{n\to\infty} \left( \prod_{|\alpha_{k,n}|\ge R} |\alpha_{k,n}| \right)^{2/n} < 1 + \eps
\]
by \eqref{2.2}({\it ii}). Thus we obtain from \eqref{5.8}, \eqref{5.7} and the above estimate that
\begin{align} \label{5.9}
\liminf_{n\to\infty} |V(\hat P_n)|^{\frac{2}{(n-1)n}} \ge \frac{\liminf_{n\to\infty} |V(P_n)|^{\frac{2}{(n-1)n}}}{\limsup_{n\to\infty} \left( \prod_{m_n<k\le n} |\alpha_{k,n}| \right)^{2/n}} \ge \frac{1}{1+\eps}.
\end{align}

We now follow a standard potential theoretic argument to show that $\tau=\mu_E.$ Let $K_M(z,t) := \min\left(-\log{|z-t|},M\right).$ It is clear that $K_M(z,t)$ is a continuous function in $z$ and $t$
on $\C\times\C$, and that $K_M(z,t)$ increases to
$-\log|z-t|$ as $M\to\infty.$ Using the Monotone
Convergence Theorem and the weak* convergence of $\hat\tau_n\times\hat\tau_n$
to $\tau\times\tau,$ we obtain for the energy of $\tau$ that
\begin{align*}
I[\tau] &=-\iint \log|z-t|\,d\tau(z)\,d\tau(t) =
\lim_{M\to\infty} \left( \lim_{n\to\infty} \iint K_M(z,t)\,
d\hat\tau_n(z)\,d\hat\tau_n(t) \right) \\ &= \lim_{M\to\infty} \left(
\lim_{n\to\infty} \left( \frac{2}{n^2} \sum_{1\le j<k\le m_n} K_M(\alpha_{j,n},\alpha_{k,n}) + \frac{M}{n} \right) \right) \\ &\le
\lim_{M\to\infty} \left( \liminf_{n\to\infty} \frac{2}{n^2}
\sum_{1\le j<k\le m_n} \log\frac{1}{|\alpha_{j,n}-\alpha_{k,n}|} \right) \\ &= \liminf_{n\to\infty} \frac{2}{n^2}
\log\frac{1}{|V(\hat P_n)|} \le \log(1+\eps),
\end{align*}
where \eqref{5.9} was used in the last estimate. Since $\eps>0$ is arbitrary, we conclude that $I[\tau]\le 0$. Recall that $\supp(\tau) \subset \hat E = \overline{\C}\setminus\Omega_E,$ where cap$(\hat E)=$ cap$(E)=1$ and $\mu_{\hat E}=\mu_E$ by \cite{Ts}, pp. 79-80. Note also that $I[\nu]>0$ for any probability measure $\nu\neq\mu_{\hat E},\ \supp(\nu)\subset \hat E$, see \cite{Ts}, pp. 79-80. Hence $\tau=\mu_{\hat E}=\mu_E$ and \eqref{2.2}({\it iii}) follows.

Let us turn to the converse statement \eqref{2.2} $\Rightarrow$ \eqref{2.1}. As in the first part of the proof, we note that
$\lim_{z\to\infty} (g_E(z,\infty)-\log|z|) = 0.$ For any $\eps>0$, we choose $R>0$ so large that $E\subset D_R$ and $|g_E(z,\infty)-\log|z||<\eps$ when $|z|\ge R.$ Thus we have from \eqref{2.2}({\it iii}) that
\[
\frac{1}{n} \sum_{|\alpha_{k,n}|\ge R} g_E(\alpha_{k,n},\infty) \le \frac{1}{n} \sum_{|\alpha_{k,n}|\ge R} \log|\alpha_{k,n}| + \frac{o(n)}{n}\eps.
\]
Increasing $R$ if necessary, we can achieve that
\[
\frac{1}{n} \sum_{|\alpha_{k,n}|\ge R} \log|\alpha_{k,n}| < \eps,
\]
by \eqref{2.2}({\it ii}), which implies that
\begin{align} \label{5.10}
\limsup_{n\to\infty} \frac{1}{n} \sum_{|\alpha_{k,n}|\ge R} g_E(\alpha_{k,n},\infty) \le \eps.
\end{align}
On setting $g_E(z,\infty) = - U^{\mu_E}(z),\ z\in\C,$ we continue $g_E(z,\infty)$ as a subharmonic function in $\C.$ Since $g_E(z,\infty)$ is now upper semi-continuous  in $\C,$ we obtain from \eqref{2.2}({\it iii}) and Theorem 0.1.4 of \cite{ST}, p. 4, that
\begin{align} \label{5.11}
\limsup_{n\to\infty} \frac{1}{n} \sum_{|\alpha_{k,n}| < R} g_E(\alpha_{k,n},\infty) &= \limsup_{n\to\infty} \int_{D_R} g_E(z,\infty)\,d\tau_n(z) \le \int_{D_R} g_E(z,\infty)\,d\mu_E(z) \\ \nonumber &= - \int U^{\mu_E}(z)\,d\mu_E(z) = - I[\mu_E] = 0,
\end{align}
where the last equality follows as the energy $I[\mu_E] = -\log{\rm cap}(E) = 0,$ see \cite{Ts}, p. 55. Observe from the definition of $M_E(P_n)$, \eqref{5.4}-\eqref{5.5} and \eqref{2.2}({\it i}) that
\begin{align*}
0 &\le \limsup_{n\to\infty} \frac{1}{n} \log M_E(P_n) \le \limsup_{n\to\infty} \frac{1}{n} \log \tilde M_E(P_n) \\ \nonumber &\le \limsup_{n\to\infty} \frac{1}{n} \log|a_n| + \limsup_{n\to\infty} \frac{1}{n} \sum_{k=1}^n g_E(\alpha_{k,n},\infty) = \limsup_{n\to\infty} \frac{1}{n} \sum_{k=1}^n g_E(\alpha_{k,n},\infty).
\end{align*}
Combining this estimate with \eqref{5.10} and \eqref{5.11}, we arrive at
\[
0 \le \limsup_{n\to\infty} \frac{1}{n} \log M_E(P_n) \le \limsup_{n\to\infty} \frac{1}{n} \log \tilde M_E(P_n) \le \eps.
\]
We now let $\eps\to 0,$ to obtain that
\begin{align} \label{5.12}
\lim_{n\to\infty} \left(M_E(P_n)\right)^{1/n} = \lim_{n\to\infty} \left(\tilde M_E(P_n)\right)^{1/n} = 1.
\end{align}

\end{proof}

An interesting by-product of the above proof is the following fact.

\begin{proposition} \label{prop5.1}
Let $P_n,\ \deg(P_n)=n\in\N,$ be a sequence of polynomials with integer coefficients and simple zeros. Suppose that $E\subset\C$ is a compact set of capacity $1$. Then \eqref{2.1} holds if and only if
\begin{align} \label{5.13}
\lim_{n\to\infty} \left(\tilde M_E(P_n)\right)^{1/n} = 1.
\end{align}
\end{proposition}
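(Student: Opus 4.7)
The plan is to observe that one direction is immediate from the sandwich inequality, while the other direction is essentially contained in the proof of Theorem \ref{thm2.1}.

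First I would dispatch the easy implication \eqref{5.13} $\Rightarrow$ \eqref{2.1}. Since $P_n$ has integer coefficients and $a_n \neq 0$, we have $|a_n| \geq 1$, and from the definition of $M_E(P_n)$ (with $g_E(\alpha_{k,n},\infty) \geq 0$) together with \eqref{5.5} we get the chain
\[
1 \leq |a_n| \leq M_E(P_n) \leq \tilde M_E(P_n).
\]
Taking $n$-th roots and letting $n\to\infty$ forces $\left(M_E(P_n)\right)^{1/n}\to 1$ by the squeeze principle. This direction uses nothing about zero distribution.

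For the converse \eqref{2.1} $\Rightarrow$ \eqref{5.13}, I would invoke Theorem \ref{thm2.1} to conclude that the hypothesis \eqref{2.1} gives conditions \eqref{2.2}(\textit{i})--(\textit{iii}). Then I would reproduce (or simply point at) the last portion of the proof of Theorem \ref{thm2.1}: starting from \eqref{2.2}, one uses continuity at infinity of $g_E(z,\infty)-\log|z|$ together with \eqref{2.2}(\textit{ii}) to bound the contribution of large zeros as in \eqref{5.10}, and uses upper semicontinuity of $g_E(\cdot,\infty)$, the weak* convergence \eqref{2.2}(\textit{iii}), and $I[\mu_E]=0$ to bound the contribution of small zeros as in \eqref{5.11}. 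Plugging these into the identity \eqref{5.4} yields
\[
0 \leq \limsup_{n\to\infty} \frac{1}{n}\log \tilde M_E(P_n) \leq \eps,
\]
and letting $\eps \to 0$ gives \eqref{5.13}. In fact the estimate \eqref{5.12} already encodes precisely this conclusion, so the proof reduces to citing that line of the argument.

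The proposition therefore requires essentially no new work beyond an extraction from the proof of Theorem \ref{thm2.1}; the main subtlety is simply noting that the forward direction of the theorem can bypass \eqref{5.13} entirely (since $M_E \leq \tilde M_E$ is enough to dominate the geometric object of interest), whereas the converse naturally passes through \eqref{5.13}. Consequently I do not anticipate a real obstacle here; the value of the statement is conceptual, showing that the ``geometric'' and ``integral'' versions of the generalized Mahler measure are asymptotically interchangeable for sequences of integer polynomials with simple zeros, even when $E$ fails to be regular (where the pointwise inequality \eqref{5.5} can be strict).
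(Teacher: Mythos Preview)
Your proposal is correct and matches the paper's own proof essentially line for line: the paper also derives \eqref{5.13}$\Rightarrow$\eqref{2.1} directly from \eqref{5.5}, and obtains \eqref{2.1}$\Rightarrow$\eqref{5.13} by citing the chain \eqref{2.1}$\Rightarrow$\eqref{2.2}$\Rightarrow$\eqref{5.12} already established in the proof of Theorem~\ref{thm2.1}.
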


\begin{proof}
The implications \eqref{2.1} $\Rightarrow$ \eqref{2.2} $\Rightarrow$ \eqref{5.12} were established in the proof of Theorem \ref{thm2.1}. Hence \eqref{2.1} implies \eqref{5.13}. The converse is immediate from \eqref{5.5}.
\end{proof}

\begin{proof}[Proof of Theorem \ref{thm2.2}.] Let $\phi\in C(\C)$ have compact support. Note that for any $\epsilon>0$ there are finitely many irreducible factors $Q$ in the sequence $P_n$ such that
\[
\left|\int\phi\,d\tau(Q) - \int\phi\,d\mu_E\right| \ge \epsilon,
\]
where $\tau(Q)$ is the zero counting measure for $Q$. Indeed, if we have an infinite sequence of such $Q_k,\ k\in\N$, then $\deg(Q_k)\to\infty$ as $k\to\infty,$ see the explanation given before Theorem \ref{thm2.2}. However, the fact that $\deg(Q_k)\to\infty$ implies that $\int\phi\,d\tau(Q_k) \to \int\phi\,d\mu_E$ by Theorem \ref{thm2.1}, because $M_E(Q_k)\le M$ gives that $\tau(Q_k)\stackrel{*}{\rightarrow} \mu_E$. Let the total number of such exceptional factors $Q_k$ be $N$. Then we have
\[
\left|n\int\phi\,d\tau_n - n\int\phi\,d\mu_E\right| \le N o(n) \max_{z\in E} \left|\phi(z) - \int\phi\,d\mu_E\right| + (n-N)\epsilon,\ n\in\N.
\]
Hence $\limsup_{n\to\infty} |\int\phi\,d\tau_n - \int\phi\,d\mu_E| \le \epsilon,$ and $\lim_{n\to\infty} \int\phi\,d\tau_n = \int\phi\,d\mu_E$ after letting $\epsilon\to 0.$

\end{proof}

\begin{proof}[Proof of Corollary \ref{cor2.3}.]
Since $\tau_n\stackrel{*}{\rightarrow} \mu_D$, we let $\phi(z)=z^m$ and obtain that
\[
\lim_{n\to\infty} \int z^m\,d\tau_n(z) = \int z^m\,d\mu_D(z) = \frac{1}{2\pi} \int_0^{2\pi} e^{im\theta}\,d\theta = 0.
\]
\end{proof}

\begin{proof}[Proof of Theorem \ref{thm2.4}.]
It is clear from the definitions of $M_E(P_n)$ and $\tilde M_E(P_n)$ that
\begin{align*}
1 \le M_E(P_n) \le \tilde M_E(P_n) \le \|P_n\|_E
\end{align*}
for any polynomial $P_n$ with integer coefficients, and any compact set $E$ of capacity 1, see \eqref{5.1}-\eqref{5.5}. Hence \eqref{2.3} implies \eqref{2.1}.

Conversely, assume that \eqref{2.1} holds true. Then \eqref{2.2} follows by Theorem \ref{thm2.1}. Let $P_n(z) = a_n \prod_{k=1}^{n} (z-\alpha_{k,n}),\ n\in\N.$ For any $\eps>0$, we find $R>0$ such that $E\subset D_R=\{z:|z|<R\}$ and
\[
\lim_{n\to\infty} \left( \prod_{|\alpha_{k,n}|\ge R} |\alpha_{k,n}| \right)^{1/n} < 1 + \eps
\]
by \eqref{2.2}({\it ii}). Since there are $o(n)$ numbers $\alpha_{k,n}$ outside $D_R$ by \eqref{2.2}({\it iii}), and since $\|z-\alpha_{k,n}\|_E \le 2|\alpha_{k,n}|$ for $|\alpha_{k,n}|\ge R$, we obtain that
\begin{align} \label{5.14}
\limsup_{n\to\infty} \left\|\prod_{|\alpha_{k,n}| \ge R} (z-\alpha_{k,n})\right\|_E^{1/n} \le \limsup_{n\to\infty}\, 2^{o(n)/n} \left( \prod_{|\alpha_{k,n}|\ge R} |\alpha_{k,n}| \right)^{1/n} \le 1 + \eps.
\end{align}
Let $\|P_n\|_E=|P_n(z_n)|,\ z_n\in E,$ and assume $\lim_{n\to\infty} z_n = z_0\in E$ by compactness. Define
\[
\hat\tau_n := \frac{1}{n} \sum_{|\alpha_{k,n}|<R} \delta_{\alpha_{k,n}},
\]
and note that $\hat\tau_n \stackrel{*}{\rightarrow} \mu_E$ as $n\to\infty$ by \eqref{2.2}({\it iii}). For the polynomial
\[
\hat P_n(z) := \prod_{|\alpha_{k,n}| < R} (z-\alpha_{k,n}),
\]
we have by the Principle of Descent (Theorem I.6.8 of \cite{ST}) that
\begin{align} \label{5.15}
\limsup_{n\to\infty} |\hat P_n(z_n)|^{1/n} = \limsup_{n\to\infty} \exp \left(-U^{\hat\tau_n}(z_n)\right) \le \exp\left(-U^{\mu_E}(z_0)\right) = 1,
\end{align}
where the last equality is a consequence of Frostman's theorem \eqref{5.1} and the regularity of $E$. It is known that $\|P_n\|_E \ge |a_n| ({\rm cap}(E))^n \ge 1$, see \cite{AB}, p. 16. We use this fact together with \eqref{2.2}({\it i}), \eqref{5.14} and \eqref{5.15} in the following estimate:
\begin{align*}
1 &\le \limsup_{n\to\infty} \|P_n\|_E^{1/n} \le \limsup_{n\to\infty} |a_n|^{1/n}\, \limsup_{n\to\infty} |\hat P_n(z_n)|^{1/n} \limsup_{n\to\infty} \left(\prod_{|\alpha_{k,n}| \ge R} |z_n-\alpha_{k,n}|\right)^{1/n} \\ &\le 1 + \eps.
\end{align*}
Letting $\eps\to 0,$ we obtain \eqref{2.3}.
\end{proof}

\begin{proof}[Proof of Corollary \ref{cor2.5}.]
Theorem \ref{thm2.1} implies that $\tau_n \stackrel{*}{\rightarrow} \mu_E$, so that
\[
\liminf_{n\to\infty} \frac{1}{n} \sum_{k=1}^n \phi(\alpha_{k,n}) \ge \liminf_{n\to\infty} \frac{1}{n} \sum_{|\alpha_{k,n}|<R} \phi(\alpha_{k,n}) = \int \phi(x) \, d\mu_E(x),
\]
where $R>0$ is sufficiently large to satisfy $E\subset D_R.$ The inequality
\[
\int \phi(x) \, d\mu_E(x) \ge \int_{-2}^2
\frac{\phi(x)\,dx}{\pi\sqrt{4-x^2}}
\]
follows from Theorem 1 of \cite{BLP}, as $\int z\, d\mu_E(z)=0$. Letting $\phi(x)=x^2,$ we obtain the second inequality in the statement.
\end{proof}

\begin{proof}[Proof of Corollary \ref{cor2.6}.]
As in the previous proof, Theorem \ref{thm2.1} implies that
\[
\liminf_{n\to\infty} \frac{1}{n} \sum_{k=1}^n \phi(\alpha_{k,n}) \ge
\int \phi(x) \, d\mu_E(x).
\]
We apply the change of variable $x=t^2$, and define the compact set $K=\{t\in\R: t^2\in E\}$. Then $K$ is symmetric about the origin, so that $\int t\, d\mu_K(t)=0$. Furthermore, $d\mu_K(t) = d\mu_E(t^2),\ t\in K$, and cap$(K)=1$; see \cite{Ra}, p. 134. The inequalities of Corollary \ref{cor2.6} are now immediate from Theorem 1 of \cite{BLP}, because
\[
\int_E \phi(x) \, d\mu_E(x) = \int_K \phi(t^2) \, d\mu_K(t).
\]
\end{proof}

\subsection{Proofs for Section 3}

It is clear that our estimate \eqref{3.2} measures the difference (discrepancy) between $\tau_n$ and $\mu_D$ is terms of the weak* convergence. Thus we consider a class of continuous test functions $\phi:\R^2\to\R$ with compact supports in the plane $\R^2=\C.$ Let
\[
\omega_{\phi}(r):=\sup_{|z-\zeta|\le r} |\phi(z)-\phi(\zeta)|
\]
be the modulus of continuity of $\phi$ in $\C$. We also require that the functions $\phi$ have finite Dirichlet integrals
\[
D[\phi]:= \iint_{\R^2} \left(\phi_x^2+\phi_y^2\right)\,dxdy,
\]
where it is assumed that the partial derivatives $\phi_x$ and $\phi_y$ exist a.e. on $\R^2$ in the sense of the area measure.

\begin{theorem} \label{thm5.2}
Let $P_n(z)=a_n \prod_{k=1}^{n} (z-\alpha_{k,n}),\ a_n\neq 0,$ be a polynomial with simple zeros. Suppose that $\phi:\C\to\R$ is a continuous function with compact support in the plane, and $D[\phi]<\infty.$ Then for any $r>0$, we have
\begin{align} \label{5.16}
&\left|\int\phi\,d\tau_n - \int\phi\,d\mu_D\right| \\ \nonumber &\le \omega_{\phi}(r) + \sqrt{\frac{D[\phi]}{2\pi}}\,\left(\frac{2}{n}\log M(P_n) - \frac{1}{n^2}\log\left|a_n^2 \Delta(P_n)\right| - \frac{1}{n}\log{r} + 4r\right)^{1/2}.
\end{align}
\end{theorem}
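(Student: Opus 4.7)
I would combine a smoothing step with a potential-theoretic Cauchy--Schwarz inequality that pairs $D[\phi]$ against the mutual energy of $\tau_n$ and $\mu_D$. Since $\tau_n$ is discrete and so has infinite self-energy, the first move is to replace it by a measure with finite energy, paying the price $\omega_{\phi}(r)$.

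Concretely, spread each $\delta_{\alpha_{k,n}}$ uniformly onto the circle $\{|z-\alpha_{k,n}|=r\}$ and form the smoothed measure $\tilde\tau_n=\frac{1}{n}\sum_k \sigma_{k,r}$. The modulus-of-continuity hypothesis immediately gives $|\int\phi\,d\tau_n - \int\phi\,d\tilde\tau_n|\le \omega_{\phi}(r)$. For the compactly supported signed measure of total mass zero $\nu:=\tilde\tau_n-\mu_D$, the distributional identity $\Delta U^{\nu}=-2\pi\nu$ combined with integration by parts gives $\int\phi\,d\nu=\frac{1}{2\pi}\iint\nabla\phi\cdot\nabla U^{\nu}\,dA$. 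Applying Cauchy--Schwarz together with the standard formula $\iint|\nabla U^{\nu}|^2\,dA=2\pi\,I[\nu]$ then yields
\[
\left|\int\phi\,d\nu\right| \le \sqrt{\frac{D[\phi]}{2\pi}}\,\sqrt{I[\nu]}.
\]

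The heart of the proof is then the energy estimate for $\nu$. Expanding $I[\nu]=I[\tilde\tau_n]-2\int U^{\mu_D}\,d\tilde\tau_n+I[\mu_D]$ and using $I[\mu_D]=0$ (since cap$(D)=1$), I would compute $I[\tilde\tau_n]$ from the self-energy $\log(1/r)$ of each $\sigma_{k,r}$ together with the pairwise interactions, which equal $\log(1/|\alpha_{j,n}-\alpha_{k,n}|)$ whenever the two circles are disjoint. After using the identity $|a_n^2\Delta(P_n)|=|a_n|^{2n}|V(P_n)|^2$, the leading contribution of $I[\tilde\tau_n]$ becomes $\frac{1}{n}\log(1/r)-\frac{2}{n^2}\log|V(P_n)|$. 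The cross term $-2\int U^{\mu_D}\,d\tilde\tau_n$ is evaluated via Jensen's formula, which gives $\int U^{\mu_D}\,d\tau_n=\frac{1}{n}(\log|a_n|-\log M(P_n))$. Collecting all pieces and absorbing the smoothing corrections into an additive $4r$ yields
\[
I[\nu] \le \frac{2}{n}\log M(P_n) - \frac{1}{n^2}\log|a_n^2\Delta(P_n)| - \frac{1}{n}\log r + 4r,
\]
which is precisely the expression under the square root in \eqref{5.16}.

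The main obstacle is quantifying the two linear-in-$r$ corrections that together produce the $4r$ term: the excess mutual energy of $\sigma_{j,r}$ and $\sigma_{k,r}$ for pairs of zeros with $|\alpha_{j,n}-\alpha_{k,n}|<2r$, where the circles overlap and the interaction is strictly larger than $-\log|\alpha_{j,n}-\alpha_{k,n}|$; and the discrepancy between $\int U^{\mu_D}\,d\tau_n$ and $\int U^{\mu_D}\,d\tilde\tau_n$ for zeros within distance $r$ of the unit circle, where $U^{\mu_D}(z)=-\log^{+}|z|$ fails to be harmonic. Both are controlled by elementary geometric estimates for integrals of $\log$ over small circles. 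A secondary technical point is justifying the Green-theorem identity for the singular measure $\nu$; this follows by an approximation argument, as $U^{\tilde\tau_n}$ is locally Lipschitz off its support and belongs to the relevant Dirichlet space.
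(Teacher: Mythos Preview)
Your proposal follows essentially the same route as the paper: smooth $\tau_n$ to the circle-averaged measure $\tau_n^r$, pay $\omega_\phi(r)$, then use Green's identity and Cauchy--Schwarz to get $\bigl|\int\phi\,d\sigma\bigr|\le\sqrt{D[\phi]/2\pi}\,\sqrt{I[\sigma]}$ for $\sigma=\tau_n^r-\mu_D$, and estimate the energy term by term.

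One clarification on your ``main obstacle'': the overlapping-circles correction is not needed, because the inequality goes the other way. Since $U^{\nu_k^r}(z)=-\log\max(r,|z-\alpha_{k,n}|)$ is superharmonic, the circle mean satisfies
\[
\int U^{\nu_k^r}\,d\nu_j^r \;\le\; U^{\nu_k^r}(\alpha_{j,n}) \;=\; -\log\max(r,|\alpha_{j,n}-\alpha_{k,n}|) \;\le\; -\log|\alpha_{j,n}-\alpha_{k,n}|
\]
for \emph{every} pair $j\neq k$, overlapping or not. Hence $I[\tau_n^r]\le -\frac{1}{n}\log r - \frac{1}{n^2}\log|\Delta(P_n)/a_n^{2n-2}|$ with no error term. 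The entire $4r$ comes from the cross term: the paper estimates $-\int U^{\mu_D}\,d\tau_n^r$ directly (rather than passing through $\tau_n$), using that $\log\max(1,|z|)\le\log(1+2r)$ on the circle around any $\alpha_{k,n}$ with $|\alpha_{k,n}|\le 1+r$ and the mean-value property otherwise, to get $-\int U^{\mu_D}\,d\tau_n^r\le \log(1+2r)+\frac{1}{n}\log M(P_n)-\frac{1}{n}\log|a_n|$; doubling yields $2\log(1+2r)\le 4r$.
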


\begin{proof}
Given $r>0$, define the measures $\nu_k^r$ with $d\nu_k^r(\alpha_{k,n} + re^{it}) = dt/(2\pi),\ t\in[0,2\pi).$ Let
\[
\tau_n^r:=\frac{1}{n}\sum_{k=1}^n \nu_k^r,
\]
and estimate
\begin{align} \label{5.17}
\left|\int\phi\,d\tau_n - \int\phi\,d\tau_n^r\right| \le \frac{1}{n}\sum_{k=1}^n \frac{1}{2\pi}\int_0^{2\pi} \left|\phi(\alpha_{k,n}) - \phi(\alpha_{k,n} + re^{it})\right|\,dt \le \omega_{\phi}(r).
\end{align}

A direct evaluation of the potentials gives that
\[
U^{\nu_k^r}(z)=-\log\max(r,|z-\alpha_{k,n}|),\quad z\in\C,
\]
and
\[
U^{\mu_D}(z) = -\log\max(1,|z|),\quad z\in\C,
\]
cf. \cite{ST}, p. 22. Consider the signed measure $\sigma:=\tau_n^r-\mu_D,\ \sigma(\C)=0.$ One computes (or see \cite{ST}, p. 92) that
\[
d\sigma=-\frac{1}{2\pi}\left(\frac{\partial U^{\sigma}}{\partial n_+} + \frac{\partial U^{\sigma}}{\partial n_-}\right) ds,
\]
where $ds$ is the arclength on $\supp(\sigma)=\{z:|z|=1\}\cup \left( \cup_{k=1}^n \{z:|z-\alpha_{k,n}|=r\}\right)$, and $n_{\pm}$ are the inner and the outer normals. Let $D_R:=\{z:|z|<R\}$ be a disk containing the support of $\phi.$ We now use Green's identity
\[
\iint_G u \Delta v\,dA =  \int_{\partial G} u\,\frac{\partial v}{\partial n}\,ds - \iint_G \nabla u \cdot \nabla v\,dA
\]
with $u=\phi$ and $v=U^{\sigma}$ in each connected component $G$ of $D_R\setminus\supp(\sigma).$ Since $U^{\sigma}$ is harmonic in $G$, we have that $\Delta U^{\sigma}=0$ in $G$. Adding Green's identities for all domains $G$, we obtain that
\begin{align} \label{5.18}
\left|\int\phi\,d\sigma\right| = \frac{1}{2\pi} \left| \iint_{D_R} \nabla \phi \cdot \nabla U^{\sigma} \,dA \right| \le \frac{1}{2\pi} \sqrt{D[\phi]}\,\sqrt{D[U^{\sigma}]},
\end{align}
by the Cauchy-Schwarz inequality. It is known that $D[U^{\sigma}]=2\pi I[\sigma]$ (cf. \cite{La}, Theorem 1.20), where $I[\sigma]=-\iint \log|z-t|\,d\sigma(z)\,d\sigma(t) = \int U^{\sigma}\,d\sigma$ is the energy of $\sigma$. Since $U^{\mu_D}(z) = -\log\max(1,|z|)$, we observe that $\int U^{\mu_D}\,d\mu_D = 0,$ so that
\[
I[\sigma]=\int U^{\tau_n^r}\,d\tau_n^r - 2\int U^{\mu_D}\,d\tau_n^r.
\]
The mean value property of harmonic functions gives that
\begin{align*}
-\int U^{\mu_D}\,d\tau_n^r &= \int\log\max(1,|z|)\, d\tau_n^r(z) \le \frac{1}{n} \left(\sum_{|\alpha_{k,n}|\le 1+r} \log(1+2r) + \sum_{|\alpha_{k,n}|>1+r} \log|\alpha_{k,n}|\right) \\ &\le \log(1+2r) +\frac{1}{n}\log M(P_n) - \frac{1}{n}\log |a_n|.
\end{align*}
We further deduce that
\[
\int U^{\tau_n^r}\,d\tau_n^r = \frac{1}{n^2} \sum_{j,k=1}^n \int U^{\nu_k^r}\,d\nu_j^r \le \frac{1}{n^2} \left(-\sum_{j\neq k} \log|\alpha_{j,n}-\alpha_{k,n}| - n\log{r}\right),
\]
and combine the energy estimates to obtain
\[
I[\sigma] \le \frac{2}{n}\log M(P_n) - \frac{1}{n^2}\log\left|a_n^2 \Delta(P_n)\right| - \frac{1}{n}\log{r} + 4r,
\]
where $ \Delta(P_n)$ is the discriminant of $P_n.$ Using \eqref{5.17}, \eqref{5.18} and the above estimate, we proceed to \eqref{5.16} via the following
\begin{align*}
\left|\int\phi\,d\tau_n - \int\phi\,d\mu\right| &\le \left|\int\phi\,d\tau_n - \int\phi\,d\tau_n^r\right| + \left|\int\phi\,d\tau_n^r - \int\phi\,d\mu\right| \\ &\le \omega_{\phi}(r) + \frac{\sqrt{D[\phi]}\sqrt{D[U^{\sigma}]}}{2\pi} = \omega_{\phi}(r) + \sqrt{\frac{D[\phi]}{2\pi}}\,\sqrt{I[\sigma]}.
\end{align*}
\end{proof}

\begin{proof}[Proof of Theorem \ref{thm3.1}]
We apply Theorem \ref{thm5.2}. Note that $D[\phi]\le 2\pi R^2 A^2,$ as $|\phi_x|\le A$ and $|\phi_y|\le A$ a.e. in $\C.$ Also, it is clear that $\omega_{\phi}(r)\le Ar.$ Since $P_n$ has integer coefficients and simple zeros, we obtain as before that $|\Delta(P_n)|\ge 1$, see \cite{Pra}, p. 24. Combining this with the inequality $|a_n|\ge 1$, we have $|a_n^2\Delta(P_n)|\ge 1$. Hence \eqref{3.2} follows from \eqref{5.16} by letting $r=1/n$, and inserting the above estimates. Note that we also used $\log\max(n,M(P_n)) \ge \log{n} > 4$ for $n\ge 55.$
\end{proof}

\begin{proof}[Proof of Corollary \ref{cor3.2}]
Since $P_n$ has real coefficients, we have that
\[
A_n = \int z\,d\tau_n(z) = \int \Re(z)\,d\tau_n(z).
\]
We now let
\[
\phi(z):=\left\{
           \begin{array}{ll}
             \Re(z), & |z|\le 1; \\
             \Re(z)(1-\log|z|), & 1\le |z|\le e; \\
             0, & |z|\ge e.
           \end{array}
         \right.
\]
An elementary computation shows that $\phi_x$ and $\phi_y$ exist on $\C\setminus S,$ where $S:=\{z:|z|=1 \mbox{ or } |z|=e\}.$ Furthermore, $|\phi_x(z)|\le 1$ and $|\phi_y(z)|\le 1/2$ for $z=x+iy\in\C\setminus S.$ The Mean Value Theorem gives
\[
|\phi(z)-\phi(t)|\le |z-t|\, \sup_{\C\setminus S} \sqrt{\phi_x^2+\phi_y^2} \le \frac{\sqrt{5}}{2}\, |z-t|.
\]
Hence we can use Theorem \ref{thm3.1} with $A=\sqrt{5}/2$ and $R=e.$
\end{proof}

\begin{proof}[Proof of Corollary \ref{cor3.3}]
Note that $\log|P_n(z)| = n \int\log|z-w|\,d\tau_n(w).$ For any $z$ with $|z|=1+1/n,$ we let
\[
\phi(w):=\left\{
           \begin{array}{ll}
             \log|z-w|, & |w|\le 1; \\
             (1-\log|w|)\log|1-\bar{z}w|, & 1\le |w|\le e; \\
             0, & |w|\ge e.
           \end{array}
         \right.
\]
Then $\phi$ is continuous in $\C$, and $\phi_x$ and $\phi_y$ exist on $\C\setminus S,$ where $S:=\{z:|z|=1 \mbox{ or } |z|=e\}.$ We next obtain that $|\phi_x(w)|=O(|z-w|^{-1})$ for $|w|<1,$ and $|\phi_x(w)|=O(|1-\bar{z}w|^{-1})$ for $1<|w|<e.$ Clearly, the same estimates hold for $|\phi_y|.$ Hence
\[
D[\phi] = O\left(\iint_{|w|\le 1} |z-w|^{-2} dA(w)\right) =  O\left(\int_{1/n}^1 r^{-1}dr \right) = O(\log{n}),
\]
and
\[
\omega_\phi(r) \le r \sup_{\C\setminus S} \sqrt{\phi_x^2+\phi_y^2} = r O(n),
\]
as $n\to\infty.$ We let $r=n^{-2}$, and use \eqref{5.16} to obtain
\[
\left|\frac{1}{n}\log|P_n(z)| - \log|z|\right| = O\left(\frac{1}{n}\right) + O(\sqrt{\log{n}}) \left(\frac{2}{n}\log{M} + \frac{2}{n}\log{n} + \frac{4}{n^2}\right)^{1/2}.
\]
Observe that all constants in $O$ terms are absolute. Recall that $|z|=1+1/n$, so that $n\log|z|\to 1$ as $n\to\infty.$ Thus the estimate for $\|P_n\|_D$ follows from the above inequality by the Maximum Principle.
\end{proof}

A close inspection of the proof of Theorem \ref{thm5.2} reveals that it may be easily extended to more general sets. In fact, far more general than those considered below. Define the distance from a point $z\in\C$ to a compact set $E$ by
\[
d_E(z):=\min_{t\in E} |z-t|.
\]
\begin{theorem} \label{thm5.3}
Let $E\subset\C$ be a compact set of capacity 1 that is bounded by finitely many piecewise smooth curves and arcs. Suppose that $\phi:\C\to\R$ is a continuous function with compact support in the plane, and $D[\phi]<\infty.$ If $P_n(z)=a_n \prod_{k=1}^{n} (z-\alpha_{k,n}),\ a_n\neq 0,$ is a polynomial with simple zeros, then for any $r>0$
\begin{align} \label{5.19}
&\left|\int\phi\,d\tau_n - \int\phi\,d\mu_E\right| \le \omega_{\phi}(r) \\ \nonumber &+ \sqrt{\frac{D[\phi]}{2\pi}}\,\left(\frac{2}{n}\log M_E(P_n) - \frac{\log\left|a_n^2 \Delta(P_n)\right|}{n^2} - \frac{\log{r}}{n} + 2\max_{d_E(z)\le 2r} g_E(z,\infty) \right)^{1/2}.
\end{align}
\end{theorem}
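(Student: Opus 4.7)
My plan is to follow the proof of Theorem~\ref{thm5.2} closely, replacing $\mu_D$ by $\mu_E$ throughout and inserting one new ingredient to control the Green function $g_E(\cdot,\infty)$ near $\supp(\mu_E)$. As before, I smooth $\tau_n$ to $\tau_n^r := (1/n)\sum_{k=1}^n \nu_k^r$, where $\nu_k^r$ is uniform on the circle $|z-\alpha_{k,n}|=r$, picking up the error $\omega_\phi(r)$. Setting $\sigma := \tau_n^r - \mu_E$, I apply Green's identity in each connected component of $D_R\setminus\supp(\sigma)$, with $D_R\supset\supp(\phi)$. The hypothesis that $\partial\Omega_E$ consists of finitely many piecewise smooth curves and arcs is exactly what legitimizes this step. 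The jump formula for the normal derivatives of $U^\sigma$ across $\supp(\sigma)$, followed by Cauchy--Schwarz and the identity $D[U^\sigma]=2\pi I[\sigma]$, gives
\[
\left|\int\phi\,d\sigma\right| \le \sqrt{\frac{D[\phi]}{2\pi}}\,\sqrt{I[\sigma]}.
\]

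The core of the work is to bound $I[\sigma]$. Bilinearity of the energy, together with $I[\mu_E]=-\log\text{cap}(E)=0$ and $-U^{\mu_E}=g_E(\cdot,\infty)$, yields $I[\sigma]=I[\tau_n^r] + 2\int g_E(z,\infty)\,d\tau_n^r(z)$. The first term is handled verbatim as in Theorem~\ref{thm5.2}: using $U^{\nu_k^r}(z)=-\log\max(|z-\alpha_{k,n}|,r)$, subharmonicity of $\log|\cdot-\alpha_{k,n}|$ for the cross terms, and the discriminant identity $|V(P_n)|^2=|\Delta(P_n)|/|a_n|^{2n-2}$, one arrives at
\[
I[\tau_n^r] \le \frac{2\log|a_n|}{n} - \frac{\log|a_n^2\Delta(P_n)|}{n^2} - \frac{\log r}{n}.
\]
The new ingredient is estimating $\int g_E\,d\tau_n^r$. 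I split into two cases for each $k$. If $d_E(\alpha_{k,n})>r$, then $\overline{B_r(\alpha_{k,n})}\cap E=\emptyset$, so $g_E$ is harmonic on this closed disk and the mean value property gives $\int g_E\,d\nu_k^r = g_E(\alpha_{k,n},\infty)$. If $d_E(\alpha_{k,n})\le r$, then both $\alpha_{k,n}$ and every point of the circle $|z-\alpha_{k,n}|=r$ lie in $\{z:d_E(z)\le 2r\}$, so that $\int g_E\,d\nu_k^r \le \max_{d_E(z)\le 2r} g_E(z,\infty)$ and $g_E(\alpha_{k,n},\infty)\ge 0$. In either case,
\[
\int g_E\,d\nu_k^r \le g_E(\alpha_{k,n},\infty) + \max_{d_E(z)\le 2r} g_E(z,\infty).
\]
Summing over $k$ and invoking regularity of $E$ (a consequence of the piecewise smooth boundary), so that $g_E(\alpha_{k,n},\infty)=0$ whenever $\alpha_{k,n}\notin\Omega_E$, I obtain $\sum_k g_E(\alpha_{k,n},\infty) = \log M_E(P_n)-\log|a_n|$. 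Feeding these bounds back into the Cauchy--Schwarz estimate and adding the smoothing error $\omega_\phi(r)$ yields exactly \eqref{5.19}.

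The main obstacle is the case-analysis step: the subharmonic correction $\int g_E\,d\nu_k^r - g_E(\alpha_{k,n},\infty)$ does not vanish when a zero lies within distance $r$ of $E$, and the extra term $2\max_{d_E(z)\le 2r} g_E(z,\infty)$ in the estimate is precisely the price one pays for this. A secondary technical point is to verify Green's identity on the possibly multiply connected region $D_R\setminus\supp(\sigma)$, where piecewise smoothness of $\partial\Omega_E$ (and the assumption $\text{cap}(E)=1$, so that $M_E=\tilde M_E$ by regularity) are both essential.
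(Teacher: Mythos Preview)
Your proposal is correct and follows essentially the same approach as the paper's own proof: the same smoothing $\tau_n\mapsto\tau_n^r$, the same use of Green's identity and Cauchy--Schwarz to reduce to $I[\sigma]$, the same bound on $I[\tau_n^r]$ via the discriminant, and the same case split on $d_E(\alpha_{k,n})\lessgtr r$ with the mean-value property applied to $g_E$ in the far case. The only cosmetic difference is that the paper writes the split as two separate sums and bounds each, whereas you combine both cases into the single inequality $\int g_E\,d\nu_k^r \le g_E(\alpha_{k,n},\infty) + \max_{d_E(z)\le 2r} g_E(z,\infty)$; the resulting estimate is identical.
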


\begin{proof}
The proof is very close to that of Theorem \ref{thm5.2}. We sketch it using the same notation, and indicating the necessary changes. Observe that \eqref{5.17} holds without change. Note that $E$ is regular under our assumptions (cf. \cite{Ts}, p. 104), so that $M_E(P_n)=\tilde M_E(P_n)$. We set $g_E(z,\infty) = -U^{\mu_E}(z),\ z\in\C,$ which gives that $g_E(z,\infty) = 0,\ z\in\C\setminus\Omega_E.$

For the signed measure $\sigma:=\tau_n^r-\mu_E,\ \sigma(\C)=0,$ one still has that
\[
d\sigma=-\frac{1}{2\pi}\left(\frac{\partial U^{\sigma}}{\partial n_+} + \frac{\partial U^{\sigma}}{\partial n_-}\right) ds,
\]
where $ds$ is the arclength on $\supp(\sigma)=\{z:z\in\supp(\mu_E)\}\cup \left( \cup_{k=1}^n \{z:|z-\alpha_{k,n}|=r\}\right)$, and $n_{\pm}$ are the inner and the outer normals. This follows from Theorem 1.1 of \cite{Pr2}, see also Example 1.2 there. We use Green's identity to obtain \eqref{5.18} in the same way as in the proof of Theorem \ref{thm5.2}. The energy estimates proceed with the only difference in the following inequality. Since $g_E(z,\infty)$ is harmonic in $\Omega_E$, the mean value property gives that
\begin{align*}
-\int U^{\mu_E}\,d\tau_n^r &= \int g_E(z,\infty)\, d\tau_n^r(z) \le \frac{1}{n} \left(\sum_{d_E(\alpha_{k,n})\le r} \max_{d_E(z)\le 2r} g_E(z,\infty) + \sum_{d_E(\alpha_{k,n})>r} g_E(\alpha_{k,n},\infty)\right) \\ &\le\max_{d_E(z)\le 2r} g_E(z,\infty) +\frac{1}{n}\log M_E(P_n) - \frac{1}{n}\log |a_n|.
\end{align*}
Hence the energy estimates give
\[
I[\sigma] \le \frac{2}{n}\log M_E(P_n) - \frac{1}{n^2}\log\left|a_n^2 \Delta(P_n)\right| - \frac{1}{n}\log{r} + 2\max_{d_E(z)\le 2r} g_E(z,\infty),
\]
and \eqref{5.19} follows by repeating the same argument as in the proof of Theorem \ref{thm5.2}.
\end{proof}

\begin{proof}[Proof of Theorem \ref{thm3.4}]
We deduce \eqref{3.3} from \eqref{5.19}. As in the proof of Theorem \ref{thm3.1}, we obtain that $D[\phi]\le 2\pi R^2 A^2$ and $\omega_{\phi}(r)\le Ar.$ Since $P_n$ has integer coefficients and simple zeros, we also have $|a_n^2\Delta(P_n)|\ge 1$. Recall that the Green function is invariant under translations, so that we may
assume $[a,b]=[-2,2]$. An elementary complex analysis argument gives for $g_{[-2,2]}(z,\infty)=\log|z+\sqrt{z^2-4}|-\log{2}$ that
\begin{align} \label{5.20}
\max_{d_{[-2,2]}(z)\le \eps} g_{[-2,2]}(z,\infty) &= g_{[-2,2]}(2+\eps,\infty) = \log(1+(\eps+\sqrt{4\eps+\eps^2})/2) \\ \nonumber &\le (\eps+\sqrt{4\eps+\eps^2})/2 \le 1.11\sqrt{\eps},\quad 0<\eps\le 0.04.
\end{align}
Now let $r=n^{-2}$, and apply the above estimates in \eqref{5.19} to obtain
\begin{align*}
\left|\int\phi\,d\tau_n - \int\phi\,d\mu_{[a,b]}\right| &\le \frac{A}{n^2} + \sqrt{\frac{2\pi R^2 A^2}{2\pi}}\,\left(\frac{2}{n}\log M_{[a,b]}(P_n) + \frac{2\log{n}}{n} + \frac{2.22\sqrt{2}}{n} \right)^{1/2} \\ &\le A(R\sqrt{5}+1) \sqrt{\frac{\log\max(n,M_{[a,b]}(P_n))}{n}}, \quad n\ge 25.
\end{align*}
Note that we used $r\le 0.04$ for $n\ge 25,$ and $\log\max(n,M_{[a,b]}(P_n)) \ge \log{n} > 2.22\sqrt{2}$ for $n\ge 25.$
\end{proof}

\begin{proof}[Proof of Corollary \ref{cor3.5}]
Consider
\[
\phi(x,y):=\left\{
           \begin{array}{ll}
             x(1-|y|), & a\le x \le b,\ |y|\le 1; \\
             a(1-|y|)(x+1-a), & a-1\le x\le a,\ |y|\le 1; \\
             b(1-|y|)(b+1-x), & b\le x\le b+1,\ |y|\le 1; \\
             0, & \mbox{ otherwise}.
           \end{array}
         \right.
\]
Computing partial derivatives, we see that $|\phi_x|\le \max(|a|,|b|)$ and $|\phi_y|\le \max(|a|,|b|)$ a.e. in $\C.$ Hence $D[\phi] \le 24 \max(|a|^2,|b|^2)$ and $|\phi(z)-\phi(t)|\le \sqrt{2}\max(|a|,|b|)\, |z-t|.$
We use \eqref{5.19} with $r=n^{-2}$ as in the proof of Theorem \ref{thm3.4}, applying \eqref{5.20} and the above estimates:
\begin{align*}
&\left|\int x\,d\tau_n(x) - \int x\,d\mu_{[a,b]}(x)\right| \le \frac{\sqrt{2}\max(|a|,|b|)}{n^2} \\ &+ \sqrt{\frac{24 \max(|a|^2,|b|^2)}{2\pi}}\,\left(\frac{2}{n}\log M_{[a,b]}(P_n) + \frac{2\log{n}}{n} + \frac{2.22\sqrt{2}}{n} \right)^{1/2} \\ &\le (\sqrt{2}+2\sqrt{15/\pi})\max(|a|,|b|) \sqrt{\frac{\log\max(n,M_{[a,b]}(P_n))}{n}}, \quad n\ge 25.
\end{align*}
It remains to observe that $M_{[a,b]}(P_n)\le M$ for $P_n\in\Z_n^s([a,b],M)$, and that
\[
\int x\,d\mu_{[a,b]}(x) = \int_a^b \frac{x\,dx}{\pi\sqrt{(x-a)(b-x)}} = \frac{a+b}{2}.
\]
\end{proof}

\begin{proof}[Proof of Corollary \ref{cor3.6}]
Consider
\[
\phi(x,y):=\left\{
           \begin{array}{ll}
             x^2(1-|y|), & |x|\le 2,\ |y|\le 1; \\
             4(1-|y|)(3-|x|), & 2\le |x|\le 3,\ |y|\le 1; \\
             0, & \mbox{otherwise}.
           \end{array}
         \right.
\]
We find for the partial derivatives that $|\phi_x|\le 4$ and $|\phi_y|\le 4$ a.e. in $\C.$ Hence $D[\phi] \le 384$ and $|\phi(z)-\phi(t)|\le 4\sqrt{2}\, |z-t|.$ We again use \eqref{5.19} with $r=n^{-2}$ as in the proof of Theorem \ref{thm3.4}, applying \eqref{5.20} and the above estimates:
\begin{align*}
\left|\int x^2 \,d\tau_n(x) - \int x^2 \,d\mu_{[-2,2]}(x)\right| &\le \frac{4\sqrt{2}}{n^2} + \sqrt{\frac{384}{2\pi}}\,\left(\frac{2}{n}\log M_{[-2,2]}(P_n) + \frac{2\log{n}}{n} + \frac{2.22\sqrt{2}}{n} \right)^{1/2} \\ &\le 4(\sqrt{2}+2\sqrt{15/\pi}) \sqrt{\frac{\log\max(n,M_{[-2,2]}(P_n))}{n}}, \quad n\ge 25.
\end{align*}
Note that $M_{[-2,2]}(P_n)\le M$ for $P_n\in\Z_n^s([-2,2],M)$, and that
\[
\int x^2\,d\mu_{[-2,2]}(x) = \int_{-2}^2 \frac{x^2\,dx}{\pi\sqrt{4-x^2}} = 2.
\]
\end{proof}

\begin{proof}[Proof of Corollary \ref{cor3.7}]
Consider $z\in\C$ such that $g_{[-2,2]}(z,\infty)=1/n,\ n=\deg(P_n).$ For each $n$, the set of such points is a level curve of the Green function, which is an ellipse enclosing $[-2,2].$ Define
\[
\phi(x,y):=\left\{
           \begin{array}{ll}
             (1-|y|)\log|z-x|, & |x|\le 2,\ |y|\le 1; \\
             (x+3)(1-|y|)\log|z+2|, & -3\le x\le -2,\ |y|\le 1; \\
             (3-x)(1-|y|)\log|z-2|, & 2\le x\le 3,\ |y|\le 1; \\
             0, & \mbox{ otherwise}.
           \end{array}
         \right.
\]
It is clear that $\phi$ is continuous in $\C$, and $\phi_x$ and $\phi_y$ exist a.e. in $\C.$ We have that $|\phi_x(x,y)| \le \max(\log(4+1/n),\log{n})$ for $2\le |x|\le 3,\ |y|\le 1;$ and $|\phi_x(x,y)|\le 1/|z-x|$ for $|x|\le 2,\ |y|\le 1.$ Also, $|\phi_y(x,y)| \le \max(\log(4+1/n),\log{n})$ for $|x|\le 3,\ |y|\le 1.$ Following an argument similar to the proof of Corollary \ref{cor3.3}, we obtain that
$D[\phi] = O(\log{n})$ and $\omega_\phi(r) = r O(n)$ as $n\to \infty$, with absolute constants in $O$ terms. Note that
\[
\int\log|z-x|\,d\tau_n(x) = \frac{1}{n} \log|P_n(z)|
\]
and
\[
\int\log|z-x|\,d\mu_{[-2,2]}(x) = g_{[-2,2]}(z,\infty) = \frac{1}{n}
\]
by \eqref{5.2} and the choice of $z$. We let $r=1/n^2$, and use \eqref{5.19} and \eqref{5.20} as in the proof of Corollary \ref{cor3.6} to obtain
\begin{align*}
\left|\frac{1}{n}\log|P_n(z)| - \frac{1}{n}\right| &= O\left(\frac{1}{n}\right) + O(\sqrt{\log{n}}) \left(\frac{2}{n}\log M_{[-2,2]}(P_n) + \frac{2\log{n}}{n} + \frac{2.22\sqrt{2}}{n} \right)^{1/2}\\ &\le O(\sqrt{\log{n}})\, \sqrt{\frac{\log\max(n,M_{[-2,2]}(P_n))}{n}}, \quad n\ge 25.
\end{align*}
Note that $M_{[-2,2]}(P_n)\le M$ for $P_n\in\Z_n^s([-2,2],M)$. Thus the estimate for $\|P_n\|_{[-2,2]}$ follows from the above inequality by the Maximum Principle.
\end{proof}

\medskip
\noindent{\bf Acknowledgement.} The author would like to thank Al Baernstein for helpful discussions about this paper.

\end{document}